\documentclass[runningheads]{llncs}
\usepackage[T1]{fontenc}
\usepackage{graphicx}
\usepackage{amsmath,amssymb,mathtools,bm}
\usepackage{algorithm}
\usepackage{algpseudocode}

\DeclareMathOperator*{\argmax}{arg\,max}
\newcommand{\R}{\mathbb{R}}
\newcommand{\E}{\mathbb{E}}
\newcommand{\Prob}{\mathbb{P}}
\newcommand{\norm}[1]{\left\lVert #1 \right\rVert}

\newcommand{\one}{\bm{1}}
\spnewtheorem{assumption}{Assumption}{\bfseries}{\itshape}
\begin{document}
\title{A Parameter-Free  Zeroth-Order Algorithm for Decentralized Stochastic Convex Optimization
}
\titlerunning{Parameter-Free Decentralized Zeroth-Order Optimization}
%
\author{Jiawei Chen\inst{1,2} \and
Alexander Rogozin\inst{1}}
\authorrunning{J. Chen, A.V. Rogozin}
%
\institute{MIRIAI, Moscow, Russia \\\email{chen.j@miriai.org}\and
Appied AI Institute, Moscow, Russia
}
\maketitle              
\begin{abstract}
We consider decentralized stochastic convex optimization on connected network, 
in which  gradients of agents are unavailable and each agent can query only noisy
function values of its own local objective. The goal is to minimize the average
objective over a compact convex domain using only local two point zeroth-order
oracles and peer-to-peer communication. We propose a decentralized POEM
method (D-POEM) that combines symmetric two point smoothing with adaptive
radius and stepsize rules, thereby avoiding prior knowledge of the Lipschitz
constant and diameter. For convex Lipschitz continuous objectives, we prove an
upper bound  that separates a centralized
optimization term from a network disagreement term.
We further conduct the numerical experiments to demonstrate POEM outperforms existing distributed zeroth-order method.
\keywords{Gradient free optimization  \and Parameter free method \and Decentralized convex optimization.}
\end{abstract}
\section{Introduction}
Decentralized zeroth-order optimization studies multi-agent problems in which
each node can query only noisy function values of its local objective and can
communicate only with graph neighbors. This setting appears in simulation-based
control, bandit feedback, experimental design, and privacy-constrained
learning, and it is challenging because the agents must optimize
collaboratively without direct gradient information.

Classical stochastic zeroth-order methods based on randomized smoothing and
finite differences show that convex optimization from function values is
possible with near-optimal dimension dependence, especially with two-point
feedback \cite{duchi2015optimal,nesterov2017random,shamir2017optimal}. Their performance, however,
typically depends on tuned stepsizes and smoothing radii. In the centralized
setting, POEM replaces manual tuning parameter with an adaptive schedule for stepsize and 
smoothing radius \cite{renparameter}. It is closely related to the first
order DoG principle \cite{ivgi2023dog}, where the stepsize is adapted from traveled
distance and accumulated gradient norms.

Extending this idea to decentralized zeroth-order optimization is nontrivial.
The adaptive quantities used by POEM are global, whereas each agent observes
only local function values and local iterates. Existing decentralized
zeroth-order methods study time-varying networks, gradient tracking, variance
reduction, constrained updates, and communication efficiency
\cite{hou2024distributed,li2021distributed,li2024achieving,lin2024decentralized,mhanna2023single,mu2024}, but they typically
rely on preset schedules or target different regimes such as nonconvex or
federated optimization. Recent first-order work achieves parameter-free
decentralized adaptation \cite{kuruzov2024achieving}, but it does not address the
zeroth-order coupling among smoothing, estimator variance, and communication.
To the best of our knowledge, this is the first work  to solve stochastic 
convex decentralized zeroth-order optimization with fully
adaptive stepsizes and smoothing, without prior Lipschitz or horizon knowledge. 

The main contributions of this work are the following.
\begin{itemize}
\item We propose D-POEM, a decentralized parameter-free stochastic zeroth-order
    method based on two-point zeroth order oracle and consensus-based projected 
    updates, together with a fully decentralized adaptive scheduler that updates 
    the iterate and a radius proxy using only one consensus per iteration.
\item We establish an upper bound on the function gap  explicitly separating the optimization 
        error from the consensus error, thereby revealing how network 
        disagreement influences convergence.
\item We conduct a practical experiments to demonstrate the performance of D-POEM
    outperforms existing distributed zeroth order method.
\end{itemize}
\section{Problem Setup}

We study the decentralized stochastic convex problem
\begin{equation}
\label{eq:problem}
\min_{x \in \mathcal{X}} f(x)
:=
\frac{1}{n}\sum_{i=1}^n f_i(x),
\qquad
f_i(x) := \E_{\xi_i}\big[F_i(x;\xi_i)\big].
\end{equation}
Each agent $i \in \{1,\dots,n\}$ knows only its own stochastic oracle $F_i$,
can query only function values, and can communicate only with its graph neighbors.
Let \(x^\star\in\arg\min_{x\in \mathcal{X}} f(x)\) denote a minimizer of \eqref{eq:problem}.
All oracle randomness, including \(\xi_i\) and the random directions used by
the algorithm, is defined on a common probability space
\((\Omega,\mathcal{F},\Prob)\). Expectations and probabilities are taken with
respect to this space unless stated otherwise.

Let $W = (w_{ij}) \in \mathbb{R}^{n \times n}$ be a symmetric, doubly stochastic
mixing matrix adapted to the communication graph. Write
\(
\sigma := \norm{W-J}_2 < 1.
\)
where \(J := \frac{1}{n}\one\one^\top\) is the averaging matrix. The matrix
\(J\) averages over the network, while \(\sigma\) measures how fast contract 
of disagreement via one communication step. Smaller \(\sigma\) means a
better connected graph.

\begin{assumption}
\label{ass:domain}
The set $\mathcal{X} \subset \R^d$ is compact and convex with diameter
\(
D_{\mathcal{X}} := \max_{x,y \in \mathcal{X}} \norm{x-y}.
\)
\end{assumption}

\begin{assumption}
\label{ass:lipschitz}
There exists a convex set \(\mathcal{X}^+\supseteq \mathcal{X}\) that contains all oracle query
points used by the algorithm. For every realization of \(\xi_i\), the map
\(x \mapsto F_i(x;\xi_i)\) is convex and \(L\)-Lipschitz on \(\mathcal{X}^+\).
\end{assumption}

\begin{assumption}
\label{ass:oracle}
For every $x \in \mathcal{X}$, $\mu > 0$, and $u \sim \mathrm{Unif}(\mathbb{S}^{d-1})$, agent $i$ can query the pair
\[
F_i(x+\mu u;\xi_i),
\qquad
F_i(x-\mu u;\xi_i),
\]
using the same sample $\xi_i$.
\end{assumption}

Assumption~\ref{ass:domain} prevents the iterates from escaping to infinity. 
Assumption~\ref{ass:lipschitz} controls how fast each oracle can change. 
Assumption~\ref{ass:oracle} is the standard zeroth-order requirement that 
lets us build a symmetric two-point gradient estimator with reduced noise.

\begin{algorithm}[t]
\caption{Decentralized POEM (D-POEM)}
\label{alg:dpoem}
\begin{algorithmic}[1]
\Require Number of agents $n$, mixing matrix $W=[w_{ij}]$, initial point $x_{i,0}=x_0\in\mathcal{X}$ for all $i\in[n]$, radius floor $r_\epsilon\in(0,D_{\mathcal{X}}]$, horizon $T\ge 1$
\State $\bar r_{i,-1}\gets r_\epsilon,\quad G_{i,-1}\gets r_\epsilon^2,\qquad \forall i\in[n]$
\For{$t=0,1,\dots,T-1$}

    \ForAll{agents $i\in[n]$ \textbf{in parallel}}
        \State $\hat r_{i,t} = \max\{\bar r_{i,t-1},\|x_{i,t} - x_{i,0} \| \}$ \label{algo:r_update}
        \State Receive $\{\hat r_{j,t}\}_{j\in\mathcal N_i}$ from neighbors
        \State $\bar r_{i,t} \gets \sum_{j=1}^n w_{ij}\hat r_{j,t}$ \label{algo:r_mix}
        \State $\mu_{i,t} \gets \bar r_{i,t} \sqrt{\dfrac{d}{t+1}}$ \label{algo:mu_update}
        \State Sample $v_{i,t}\sim \mathrm{Unif}(\mathbb S^{d-1})$ and a fresh oracle sample $\xi_{i,t}$
        \State  $
        g_{i,t}\gets
        \dfrac{d}{2\mu_{i,t}}
        \Bigl(
        F_i(x_{i,t}+\mu_{i,t} v_{i,t};\xi_{i,t})
        -
        F_i(x_{i,t}-\mu_{i,t} v_{i,t};\xi_{i,t})
        \Bigr)v_{i,t}
        $ \label{algo:g_update}
        \State $G_{i,t}\gets G_{i,t-1}+\|g_{i,t}\|^2$ \label{algo:G_update}
        \State $\eta_{i,t}\gets \dfrac{\bar r_{i,t}}{\sqrt{G_{i,t}}}$   \label{algo:eta_update}  
        \State Receive $\{x_{j,t}\}_{j\in\mathcal N_i}$ from neighbors
        \State $z_{i,t}\gets \sum_{j=1}^n w_{ij}x_{j,t}$ \label{algo:z_update}
        \State  $
        x_{i,t+1}\gets
        \Pi_{\mathcal{X}}\!\left(
            z_{i,t}
            -
            \eta_{i,t}g_{i,t}
        \right)
        $ \label{algo:x_update}
    \EndFor
\EndFor
\State \textbf{Output:} iterate sequence $\{x_{i,T}\}_{i=1}^n$
\end{algorithmic}
\end{algorithm}

\section{Method}

We now describe D-POEM, our decentralized adaptation of POEM for stochastic
zeroth-order optimization over a communication graph. Let \(\Pi_{\mathcal{X}}\)
denote Euclidean projection onto \(\mathcal{X}\). Algorithm~\ref{alg:dpoem}
summarizes one round of D-POEM. Agent \(i\) first updates a local radius proxy
and averages it with its neighbors (Lines~\ref{algo:r_update}--\ref{algo:r_mix}).
The mixed radius \(\bar r_{i,t}\) then defines the smoothing parameter
\(\mu_{i,t}\) and the adaptive stepsize \(\eta_{i,t}\) through the accumulator
\(G_{i,t}=r_\epsilon^2+\sum_{s=0}^t \norm{g_{i,s}}^2\)
(Lines~\ref{algo:mu_update}, \ref{algo:G_update}, and
\ref{algo:eta_update}). Next, the agent forms a symmetric two-point estimator
(Line~\ref{algo:g_update}), computes the mixed iterate \(z_{i,t}\)
(Line~\ref{algo:z_update}), and performs the consensus-plus-projection update
(Line~\ref{algo:x_update}). Each round therefore uses two local function
evaluations, one scalar mixing average step for the radius proxy, and one
vector mixing average step for the primal iterate.

For the analysis, we use
\[
\bar x_t:=\frac{1}{n}\sum_{i=1}^n x_{i,t},
\qquad
\bar r_t:=\frac{1}{n}\sum_{i=1}^n \bar r_{i,t},
\qquad
\tilde{x}_t
:=
\frac{\sum_{k=0}^{t-1}\bar r_k\,\bar x_k}{R_t},
\]
where \(R_t:=\sum_{k=0}^{t-1}\bar r_k\). The iterate \(\tilde x_t\) is the
natural decentralized analogue of the averaged output used in the centralized
POEM analyses. The weights \(\bar r_k\) are not arbitrary: they are the same
adaptive radii that govern the smoothing and stepsize rules, and this
alignment is what makes the later weighted-regret argument works well.

\section{Main Result}
All proofs are deferred to Appendix~\ref{app:proofs}.
Our analysis considers the weighted average of the 
sequence $\tilde{x}_t$ generated from Algorithm \ref{alg:dpoem}. 
Since the objective $f_{i}$ is convex and with Jensen inequality 
we can get following decomposition, for every \(t\ge 1\):
\begin{equation}
\label{eq:main-decomposition}
R_t\bigl(f(\tilde x_t)-f(x^\star)\bigr)
\le
\mathcal W_t+\mathcal N_t+\mathcal B_t+\mathcal E_t,
\end{equation}
where 
\begin{align*}
\mathcal W_t
&:=
\sum_{k=0}^{t-1}\frac{\bar r_k}{n}\sum_{i=1}^{n}\langle g_{i,k},z_{i,k}-x^\star\rangle,
\mathcal N_t
:=
\sum_{k=0}^{t-1}\bar r_k\cdot \frac{1}{n}\sum_{i=1}^{n}
\langle \Delta_{i,k},x_{i,k}-x^\star\rangle,\\
\mathcal B_t
&:=
\sum_{k=0}^{t-1}\frac{2L\bar r_k}{n}\sum_{i=1}^{n}\mu_{i,k},\\
\mathcal E_t
&:=
\sum_{k=0}^{t-1}\frac{\bar r_k}{n}\sum_{i=1}^{n}\langle g_{i,k},x_{i,k}-z_{i,k}\rangle
+
\sum_{k=0}^{t-1}\frac{L\bar r_k}{n}\sum_{i=1}^{n}\|x_{i,k}-\bar x_k\|.
\end{align*}
write \(\Delta_{i,k}:=\nabla f_{i,\mu_{i,k}}(x_{i,k})-g_{i,k}\),
\(z_{i,k}:=\sum_{j=1}^n w_{ij}x_{j,k}\), and
\(\hat G_t:=\max_{i\in[n]} G_{i,t}\).The first three terms are 
from the centralized POEM errors, while \(\mathcal E_t\) is 
the disagreement penalty. The next four lemmas control
these terms separately.

We begin with the term \(\mathcal W_t\), which is the projected gradient 
descent component inherited from the projected POEM recursion. This part 
need to deal with the mismatch of local radii and global radius, here we 
show that it is still bounded with one consensus round on the local radii $\hat r_{i,k}$.
\begin{lemma}[Weighted regret bound]\label{lem:weighted-regret}
For every \(t\ge 1\),
\[
\mathcal W_t
\le
3D_{\mathcal{X}}\bar r_t\sqrt{\hat G_{t-1}}.
\]
\end{lemma}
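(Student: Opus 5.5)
The plan is to follow the standard DoG/POEM weighted-regret argument, applied agentwise to the projected step $x_{i,k+1}=\Pi_{\mathcal X}(z_{i,k}-\eta_{i,k}g_{i,k})$ and then averaged over $i$.

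\emph{Per-agent inequality.} Since $x^\star\in\mathcal X$, nonexpansiveness of the projection gives
\[
\|x_{i,k+1}-x^\star\|^2\le \|z_{i,k}-x^\star\|^2-2\eta_{i,k}\langle g_{i,k},z_{i,k}-x^\star\rangle+\eta_{i,k}^2\|g_{i,k}\|^2 .
\]
I rearrange this for $\langle g_{i,k},z_{i,k}-x^\star\rangle$ and multiply by $\bar r_{i,k}/\eta_{i,k}=\sqrt{G_{i,k}}$. This yields
\[
\bar r_{i,k}\langle g_{i,k},z_{i,k}-x^\star\rangle\le \tfrac{\sqrt{G_{i,k}}}{2}\bigl(\|z_{i,k}-x^\star\|^2-\|x_{i,k+1}-x^\star\|^2\bigr)+\tfrac{\bar r_{i,k}\|g_{i,k}\|^2}{2\sqrt{G_{i,k}}}.
\]

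\emph{Bounding the two pieces.} For the second piece I use $\bar r_{i,k}\le \max_j\hat r_{j,t-1}$ (the $\hat r$ are nondecreasing in $k$) together with the standard estimate $\sum_k \|g_{i,k}\|^2/\sqrt{G_{i,k}}\le 2\sqrt{G_{i,t-1}}$. For the first piece I avoid telescoping distances, which are awkward because $z_{i,k}$ is a mixture rather than $x_{i,k}$. Instead I use $\|z_{i,k}-x^\star\|^2-\|x_{i,k+1}-x^\star\|^2=\langle z_{i,k}-x_{i,k+1},\,z_{i,k}+x_{i,k+1}-2x^\star\rangle\le 2D_{\mathcal X}\|z_{i,k}-x_{i,k+1}\|\le 2D_{\mathcal X}\eta_{i,k}\|g_{i,k}\|$. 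Here $z_{i,k}\in\mathcal X$ by convexity, and the projection step length is at most $\eta_{i,k}\|g_{i,k}\|$. Multiplying by $\sqrt{G_{i,k}}/2$, the first piece is at most $D_{\mathcal X}\bar r_{i,k}\|g_{i,k}\|$.

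So each term is $\lesssim D_{\mathcal X}\bar r\|g\|$. Summing $\|g_{i,k}\|$ over $k$ gives $\sqrt{t}$ times a gradient norm, not $\sqrt{G}$. That is a weaker, DoG-incompatible bound, so the crude $D_{\mathcal X}$ route on the first piece is too lossy. I will therefore telescope properly.

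\emph{Telescoping with weights $\bar r_k$.} Write $\bar r_k\langle g,z-x^\star\rangle=(\bar r_k/\bar r_{i,k})\,\bar r_{i,k}\langle\cdot\rangle$, or more simply divide the one-step inequality by $\eta_{i,k}$ and multiply by $\bar r_k$. The first piece becomes $\tfrac{\bar r_k}{2\eta_{i,k}}(\|z_{i,k}-x^\star\|^2-\|x_{i,k+1}-x^\star\|^2)$. Averaging over $i$ and using Jensen/double stochasticity, $\frac1n\sum_i\|z_{i,k}-x^\star\|^2\le\frac1n\sum_i\|x_{i,k}-x^\star\|^2$. Thus, with a common coefficient, consecutive terms telescope across agents. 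The coefficients $c_{i,k}:=\bar r_k/(2\eta_{i,k})=\bar r_k\sqrt{G_{i,k}}/(2\bar r_{i,k})$ differ across agents and in $k$. I handle this by Abel summation: bounding each $\|\cdot-x^\star\|^2\le D_{\mathcal X}^2$ and the increments of $c$ by their total variation. This leaves the term $D_{\mathcal X}^2\max_{k,i}c_{i,k}$, which I then need to show is $\lesssim D_{\mathcal X}\bar r_t\sqrt{\hat G_{t-1}}$, using $\bar r_{i,k}\ge r_\epsilon$ and the ratio $\bar r_k/\bar r_{i,k}$.

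\emph{Main obstacle.} The mismatch between local $\bar r_{i,k}$ and global $\bar r_k$ is exactly where the factor $D_{\mathcal X}^2$ must be converted to $D_{\mathcal X}\bar r_t$. I would try first to exploit $\bar r_{i,k}\ge \|x_{i,k}-x_0\|$-type lower bounds. Alternatively, I would use a weighted distance $\|x-x^\star\|\le \|x-x_0\|+\|x_0-x^\star\|$ as in DoG, bounding $\|x_{i,k}-x^\star\|^2$ differences by $4D_{\mathcal X}\bar r$ rather than $D_{\mathcal X}^2$. Combined with the gradient-sum bound, which contributes $\bar r_t\sqrt{\hat G_{t-1}}\le D_{\mathcal X}\bar r_t\sqrt{\hat G_{t-1}}/r_\epsilon\cdot r_\epsilon$, this should give the constant $3$: $2$ from the distance telescoping and $1$ from the $\sum\|g\|^2/\sqrt G$ term. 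In the latter step I use $\bar r\le D_{\mathcal X}$, since all radii are distances within $\mathcal X$ or equal $r_\epsilon\le D_{\mathcal X}$.
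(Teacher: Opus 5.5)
You correctly locate the difficulty, but the proposal has a genuine gap exactly there. You never produce a coefficient that is common to all agents and nondecreasing in $k$ in front of $\frac1n\sum_i\bigl(\|z_{i,k}-x^\star\|^2-\|x_{i,k+1}-x^\star\|^2\bigr)$. Without one, neither the Jensen step through the mixing nor the DoG telescoping goes through.

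\textbf{Why your Abel summation fails.} With $c_{i,k}=\bar r_k\sqrt{G_{i,k}}/(2\bar r_{i,k})$, Jensen only gives $\sum_i c_{i,k}\|z_{i,k}-x^\star\|^2\le\sum_j\bigl(\sum_i w_{ij}c_{i,k}\bigr)\|x_{j,k}-x^\star\|^2$. So Abel summation leaves increments $\sum_i w_{ij}c_{i,k}-c_{j,k-1}$ with no definite sign. Bounding the squared distances by $D_{\mathcal X}^2$ then controls the sum by the positive variation of these increments, not by $\max_{i,k}c_{i,k}$. Even for weights constant in $k$ but unequal across agents, the positive parts of $\sum_i w_{ij}c_i-c_j$ recur at every step, and the bound grows linearly in $t$.

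Even granting $D_{\mathcal X}^2\max_{i,k}c_{i,k}$, this can exceed the target $D_{\mathcal X}\bar r_t\sqrt{\hat G_{t-1}}$ by a factor of order $D_{\mathcal X}/\bar r_{i,k}$, which can be as large as $D_{\mathcal X}/r_\epsilon$. The floor $\bar r_{i,k}\ge r_\epsilon$ cannot turn this into a constant. Replacing squared distances by $D_{\mathcal X}^2$ discards precisely the DoG mechanism: a common nondecreasing $a_k$ gives $\sum_{k<t}a_k(b_k-b_{k+1})\le a_{t-1}(\max_{s<t}b_s-b_t)$, and that last difference of squared distances is then bounded by a constant times a radius times $D_{\mathcal X}$.

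\textbf{Your final paragraph.} It lists ideas without carrying any out. The split $3=2+1$ is read off the target, and the displayed estimate reduces to $1\le D_{\mathcal X}$. The lower bound $\bar r_{i,k}\ge\|x_{i,k}-x_0\|$ you hope to use holds for $\hat r_{i,k}$ but not for the mixed radius.

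\textbf{The gradient-sum piece.} $\hat r_{j,k}$ is not nondecreasing per agent, since mixing can pull $\bar r_{j,k-1}$ below $\hat r_{j,k-1}$; only the average $\bar r_k=\frac1n\sum_j\hat r_{j,k}$ is. The coefficient of this piece is $\bar r_k\bar r_{i,k}/(2\sqrt{G_{i,k}})$, not your $\bar r_{i,k}/(2\sqrt{G_{i,k}})$. Use $\bar r_k\le\bar r_t$, $\bar r_{i,k}\le D_{\mathcal X}$ and $\sum_k\|g_{i,k}\|^2/\sqrt{G_{i,k}}\le 2\sqrt{G_{i,t-1}}$, which gives $D_{\mathcal X}\bar r_t\sqrt{\hat G_{t-1}}$.

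\textbf{Comparison with the paper.} The paper forces a common coefficient in two moves. It splits $\bar r_k/(2\eta_{i,k})=\sqrt{G_{i,k}}/2+(\bar r_k-\bar r_{i,k})/(2\eta_{i,k})$, collecting the second part into $\mathcal M_{i,k}$, and then replaces $\sqrt{G_{i,k}}$ by $\sqrt{\hat G_k}$. Jensen and DoG's Lemma 1 then give $2D_{\mathcal X}\bar r_t\sqrt{\hat G_{t-1}}$, and the gradient sum gives the remaining $D_{\mathcal X}\bar r_t\sqrt{\hat G_{t-1}}$.

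Copying this will not close your gap, however. The replacement $\sqrt{G_{i,k}}\to\sqrt{\hat G_k}$ is valid only when $\|z_{i,k}-x^\star\|^2-\|x_{i,k+1}-x^\star\|^2\ge 0$, which fails whenever a noisy step moves an agent away from $x^\star$. Likewise, $\sum_i\mathcal M_{i,k}=0$ does not follow from $\sum_i(\bar r_k-\bar r_{i,k})=0$, because the factor multiplying $\bar r_k-\bar r_{i,k}$ depends on $i$.

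Your own one-step estimate shows what a rigorous version has to carry. It gives $|\mathcal M_{i,k}|\le D_{\mathcal X}|\bar r_k-\bar r_{i,k}|\,\|g_{i,k}\|$, and it bounds the residual from replacing $\sqrt{G_{i,k}}$ by $\sqrt{\hat G_k}$ by $(\sqrt{\hat G_k}-\sqrt{G_{i,k}})D_{\mathcal X}\eta_{i,k}\|g_{i,k}\|$. These are radius- and accumulator-disagreement terms, and the stated bound does not contain them.
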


Once the descent term is under control, the next term comes from the
fact that the algorithm only  get zeroth-order gradient estimator. The
following lemma isolates this stochastic noise from gradient as 
a martingale difference sequence.

\begin{lemma}[Estimator noise]\label{lem:dec-noise}
For every \(\delta\in(0,1)\), there exists an event \(\Omega_{T,\delta}\subseteq \Omega\)
with \(\Prob(\Omega_{T,\delta})\ge 1-\delta\) such that, simultaneously for all
\(1\le t\le T\),
\[
\mathcal N_t
\le
8D_{\mathcal{X}}\bar r_{t-1}
\sqrt{\theta_{t,\delta}\hat G_{t-1}+4L^2d^2\theta_{t,\delta}^2}.
\]
where \(\theta_{t,\delta}:=\log\!\bigl(60\log(6t/\delta)\bigr)\).
\end{lemma}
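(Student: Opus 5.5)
The plan is to read $\mathcal N_t$ as a weighted sum of martingale differences and apply a time-uniform, self-normalized concentration inequality of the type used in DoG/POEM analyses (Ivgi et al., Lemma 7). Let $\mathcal F_k$ be the $\sigma$-algebra generated by all directions and samples up to round $k-1$. Then $x_{i,k}$ is $\mathcal F_k$-measurable. By Lines~\ref{algo:r_update}--\ref{algo:mu_update}, so are $\hat r_{i,k}$, $\bar r_{i,k}$, $\mu_{i,k}$ and hence $\bar r_k$, because these depend only on iterates at time $k$ and on earlier radii. The symmetric two-point estimator is unbiased for the smoothed gradient: $\E[g_{i,k}\mid\mathcal F_k]=\nabla f_{i,\mu_{i,k}}(x_{i,k})$. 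This uses the fact that $\mu_{i,k}$ is predictable, together with Assumption~\ref{ass:oracle} and the standard identity $\frac{d}{\mu}\E[F(x+\mu v)v]=\nabla \E_{u\in\mathbb B}F(x+\mu u)$. Consequently $X_k:=\frac1n\sum_i\langle\Delta_{i,k},x_{i,k}-x^\star\rangle$ satisfies $\E[X_k\mid\mathcal F_k]=0$, and $\mathcal N_t=\sum_{k<t}\bar r_kX_k$ with predictable weights $\bar r_k$.

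The second step bounds the increments. By Cauchy--Schwarz and Assumption~\ref{ass:domain}, $|X_k|\le \frac{D_{\mathcal X}}{n}\sum_i\|\Delta_{i,k}\|\le D_{\mathcal X}\max_i\|\Delta_{i,k}\|$. By Assumption~\ref{ass:lipschitz}, $\|g_{i,k}\|\le \frac{d}{2\mu}\cdot 2L\mu=dL$ almost surely, and $\|\nabla f_{i,\mu}\|\le L$. Hence $\|\Delta_{i,k}\|\le \|g_{i,k}\|+L$, and in particular $\|\Delta_{i,k}\|\le 2dL$. For the predictable scale I take $\hat X_k:=D_{\mathcal X}(\max_i\|g_{i,k}\|+L)$, which is not predictable in general. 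So, following DoG, I will use the version of the concentration lemma that only needs $|X_k|\le \hat X_k$ with $\hat X_k$ adapted, and that outputs a bound in terms of $\sum_{k<t}\bar r_k^2\hat X_k^2$ plus a $\max$-term. Concretely I will invoke the following statement. With probability $\ge 1-\delta$, for all $t\le T$,
\[
\Bigl|\sum_{k<t}\bar r_kX_k\Bigr|\le 8\bar r_{t-1}\sqrt{\theta_{t,\delta}\sum_{k<t}\hat X_k^2+c^2\theta_{t,\delta}^2},
\]
where $c$ is an almost sure bound on $\hat X_k$ and the weights $\bar r_k$ are nondecreasing. I then take $\Omega_{T,\delta}$ to be this event.

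The main obstacle is monotonicity of the weights. The concentration lemma needs $\bar r_k\le\bar r_{t-1}$ for $k<t$. Line~\ref{algo:r_update} gives $\hat r_{i,t}\ge\bar r_{i,t-1}$. Because $W$ is doubly stochastic, averaging over $i$ gives $\bar r_t=\frac1n\sum_j\hat r_{j,t}\ge\frac1n\sum_j\bar r_{j,t-1}=\bar r_{t-1}$, so the network-averaged weights are nondecreasing even though the local $\bar r_{i,k}$ need not be. This is exactly why the statement is phrased with $\bar r_{t-1}$.

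Finally, I convert $\sum_{k<t}\hat X_k^2$ into $\hat G_{t-1}$. The square $\hat X_k^2$ contains an $L^2$ part and a $\max_i\|g_{i,k}\|^2$ part. The first bound is $\max_i\|g_{i,k}\|^2\le\sum_i\|g_{i,k}\|^2$, but that loses a factor $n$. It is better to absorb $D_{\mathcal X}$ and to use the fact that $\max_i\sum_{k<t}\|g_{i,k}\|^2\le\hat G_{t-1}$ when the lemma is applied per agent. If the maximum cannot be exchanged with the sum, I will instead apply the concentration lemma agent-wise to $\langle\Delta_{i,k},x_{i,k}-x^\star\rangle$ with the union bound absorbed into $\theta$, or handle the $L^2t$ contribution via $\hat G\ge r_\epsilon^2$ and $L$-terms folded into $4L^2d^2\theta^2$. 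The remaining bookkeeping consists of the constants $8$ and $4L^2d^2$, which come from $c=2dLD_{\mathcal X}$.
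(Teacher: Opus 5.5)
Your preliminary steps are fine: the filtration, the predictability of $x_{i,k},\bar r_{i,k},\mu_{i,k},\bar r_k$, the unbiasedness $\E[g_{i,k}\mid\mathcal F_k]=\nabla f_{i,\mu_{i,k}}(x_{i,k})$, the bound $\|\Delta_{i,k}\|\le 2Ld$, and the monotonicity of $\bar r_k$ via double stochasticity of $W$, which the paper uses implicitly. The concentration inequality you invoke is also valid, but only because it is the DoG/POEM lemma applied with the trivial predictable proxy $0$. The proof breaks at the final step, which is not mere bookkeeping: $\sum_{k<t}\hat X_k^2$ cannot be bounded by $D_{\mathcal X}^2\hat G_{t-1}$. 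Since $\hat X_k\ge LD_{\mathcal X}$ always, $\sum_{k<t}\hat X_k^2\ge tL^2D_{\mathcal X}^2$. But $\hat G_{t-1}$ has no pathwise lower bound beyond $r_\epsilon^2$. For example, take all $F_i(x;\xi)=\langle a,x\rangle$ with directions nearly orthogonal to $a$. Then the iterates barely leave $x_0$ and $\hat G_{t-1}\approx r_\epsilon^2$, yet $X_k\approx f(x_0)-f(x^\star)$. So no sharper uncentered envelope can work either. The $tL^2$ term also cannot be absorbed into $r_\epsilon^2$ or into $4L^2d^2\theta_{t,\delta}^2$, which grows only like $(\log\log t)^2$. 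Separately, $\sum_k\max_i\|g_{i,k}\|^2$ can exceed $\max_i\sum_k\|g_{i,k}\|^2$ by a factor up to $n$. The agent-wise union bound replaces $\theta_{t,\delta}$ by $\theta_{t,\delta/n}$ and still carries the $tL^2$ term, so neither fallback yields the stated bound.

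The missing idea is a nontrivial \emph{predictable centering}. In the concentration lemma the paper invokes (\cite[Lemma~5]{renparameter}, i.e.\ DoG's Lemma~7), $\widehat X_k$ is an $\mathcal F_k$-measurable approximation subtracted from $X_k$, not an adapted envelope. The variance proxy is $\sum_k(X_k-\widehat X_k)^2$. Take $\widehat X_k:=\frac1n\sum_i\langle\nabla f_{i,\mu_{i,k}}(x_{i,k}),x_{i,k}-x^\star\rangle$, which is predictable with $|\widehat X_k|\le LD_{\mathcal X}$. Then $X_k-\widehat X_k=-\frac1n\sum_i\langle g_{i,k},x_{i,k}-x^\star\rangle$, so the smoothed-gradient part, which is the source of your $L$, cancels exactly. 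Next, bound by the agent average with Jensen rather than by the maximum: $(X_k-\widehat X_k)^2\le D_{\mathcal X}^2\,\frac1n\sum_i\|g_{i,k}\|^2$. Hence $\sum_{k<t}(X_k-\widehat X_k)^2\le D_{\mathcal X}^2\,\frac1n\sum_iG_{i,t-1}\le D_{\mathcal X}^2\hat G_{t-1}$. With $c=2LdD_{\mathcal X}$ and the nondecreasing weights $\bar r_k$, this gives exactly the stated bound. The paper does the same after normalizing by $\hat d_k$, using weights $\bar r_k\hat d_k$, and finally bounding $\hat d_{t-1}\le D_{\mathcal X}$; that normalization is cosmetic for this statement.
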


Since D-POEM works with smoothed local objectives, we next bound the bias
introduced by the smoothing radii \(\mu_{i,k}\).

\begin{lemma}[Smoothing bias]\label{lem:smoothing-bias}
For every \(t\ge 1\),
\(
\mathcal B_t
\le
4D_{\mathcal{X}}L\bar r_t\sqrt{dt}.
\)
\end{lemma}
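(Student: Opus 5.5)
The plan is to evaluate $\mathcal B_t$ in closed form using the smoothing rule of Line~\ref{algo:mu_update}, and then bound it with two structural facts about the radii: every radius proxy is at most $D_{\mathcal X}$, and the network-averaged radius $\bar r_k$ is nondecreasing in $k$.

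\emph{Step 1: collapse the inner sum.} By Line~\ref{algo:mu_update}, $\mu_{i,k}=\bar r_{i,k}\sqrt{d/(k+1)}$. Averaging over agents gives $\frac1n\sum_i\mu_{i,k}=\bar r_k\sqrt{d/(k+1)}$, so
\[
\mathcal B_t=2L\sqrt d\sum_{k=0}^{t-1}\frac{\bar r_k^{\,2}}{\sqrt{k+1}}.
\]

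\emph{Step 2: uniform bound $\bar r_{i,k}\le D_{\mathcal X}$.} I will prove this by induction on $k$ that $\hat r_{i,k}\le D_{\mathcal X}$ and $\bar r_{i,k}\le D_{\mathcal X}$ for all $i$.
\begin{itemize}
\item Base: $\bar r_{i,-1}=r_\epsilon\le D_{\mathcal X}$ by the choice of $r_\epsilon$.
\item Step: $x_{i,k},x_{i,0}\in\mathcal X$, so $\|x_{i,k}-x_{i,0}\|\le D_{\mathcal X}$ (Assumption~\ref{ass:domain}). Hence Line~\ref{algo:r_update} gives $\hat r_{i,k}\le D_{\mathcal X}$.
\item Line~\ref{algo:r_mix} forms a convex combination, since the rows of $W$ are nonnegative and sum to one. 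It therefore preserves the bound, so $\bar r_{i,k}\le D_{\mathcal X}$, and hence $\bar r_k\le D_{\mathcal X}$.
\end{itemize}

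\emph{Step 3: monotonicity of $\bar r_k$.} Because $W$ is doubly stochastic, its columns also sum to one. Therefore
\[
\bar r_k=\frac1n\sum_i\sum_j w_{ij}\hat r_{j,k}=\frac1n\sum_j\hat r_{j,k}\ge\frac1n\sum_j\bar r_{j,k-1}=\bar r_{k-1},
\]
using $\hat r_{j,k}\ge\bar r_{j,k-1}$ from Line~\ref{algo:r_update}. Consequently, for $k\le t-1$,
\[
\bar r_k^{\,2}\le D_{\mathcal X}\bar r_k\le D_{\mathcal X}\bar r_t.
\]

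\emph{Step 4: sum the series.} Using $\sum_{k=0}^{t-1}(k+1)^{-1/2}\le 2\sqrt t$, we get
\[
\mathcal B_t\le 2L\sqrt d\,D_{\mathcal X}\bar r_t\cdot 2\sqrt t=4D_{\mathcal X}L\bar r_t\sqrt{dt},
\]
as claimed.

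The only non-mechanical point is Step 3. There the column-stochasticity of $W$ is what makes the mixing step preserve the network average, so that the local monotonicity $\hat r_{j,k}\ge\bar r_{j,k-1}$ transfers to the global average $\bar r_k$. A local per-agent radius $\bar r_{i,k}$ need not be monotone, but the average is, and that is all the bound requires.
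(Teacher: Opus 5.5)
Your proof is correct and follows the same route as the paper: collapse $\frac1n\sum_i\mu_{i,k}$ to $\bar r_k\sqrt{d/(k+1)}$, bound $\bar r_k^2\le D_{\mathcal X}\bar r_t$, and use $\sum_{k<t}(k+1)^{-1/2}\le 2\sqrt t$. The paper uses $\bar r_k\le D_{\mathcal X}$ and the monotonicity of $\bar r_k$ without comment, so your Steps 2--3 supply the justification it omits; in particular, the column-stochasticity argument gives $\bar r_k=\frac1n\sum_j\hat r_{j,k}\ge\bar r_{k-1}$.
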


At this stage, the optimization errors from POEM is bounded.
What remains is the error from the mismatch
between local iterates and their network average. We handle this in two steps:
first we estimate the average size of the disagreement process itself, and
then we convert that estimate into a bound on \(\mathcal E_t\).

\begin{lemma}[Time-averaged consensus error]\label{lem:avg-consensus-A}
Under Assumptions~\ref{ass:domain}--\ref{ass:oracle} and common initialization
\(x_{1,0}=\cdots=x_{n,0}\), for any horizon \(K\ge 1\),
\[
\frac{1}{K}\sum_{k=0}^{K-1}\|\widetilde X_k\|_F
\le
\frac{D_{\mathcal{X}}}{1-\sigma}
\sqrt{\frac{1}{K}\sum_{i=1}^{n}\log\!\Bigl(\frac{G_{i,K-1}}{G_{i,0}}\Bigr)}.
\]
where \(X_k\in\mathbb R^{n\times d}\) stacks the iterates \(x_{i,k}\) as rows
and \(\widetilde X_k:=(I-J)X_k\).
\end{lemma}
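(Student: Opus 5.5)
Our plan is to write a recursion for the disagreement matrix $\widetilde X_k$, unroll it with geometric weights $\sigma^{k-s}$, and then sum over time. The per-step perturbations are the adaptive increments $\eta_{i,s}g_{i,s}$, and their squared norms telescope logarithmically through $G_{i,s}$.

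\textbf{Perturbed consensus form.} Write the update as
\[
X_{k+1}=WX_k-E_k, \qquad E_k := WX_k-\Pi_{\mathcal X}(WX_k-H_k)\ \text{(row-wise)},
\]
where $H_k$ stacks the rows $\eta_{i,k}g_{i,k}$. The rows $z_{i,k}$ are convex combinations of points of $\mathcal X$, so they lie in $\mathcal X$, and $\Pi_{\mathcal X}(z_{i,k})=z_{i,k}$. Nonexpansiveness of the projection then gives $\|e_{i,k}\|\le \eta_{i,k}\|g_{i,k}\|$. Since $(I-J)W=(W-J)(I-J)$, we get
\[
\widetilde X_{k+1}=(W-J)\widetilde X_k-(I-J)E_k .
\]
With $\widetilde X_0=0$ (common initialization), unrolling and using $\|I-J\|_2\le 1$ yields
\[
\|\widetilde X_k\|_F\le \sum_{s=0}^{k-1}\sigma^{k-1-s}\|E_s\|_F .
\]

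\textbf{Summation over time.} Summing over $k<K$ and exchanging the order of summation gives
\[
\sum_{k=0}^{K-1}\|\widetilde X_k\|_F\le \frac{1}{1-\sigma}\sum_{s=0}^{K-2}\|E_s\|_F .
\]
By Cauchy--Schwarz,
\[
\sum_s\|E_s\|_F\le \sqrt{K}\Bigl(\sum_s\sum_i \eta_{i,s}^2\|g_{i,s}\|^2\Bigr)^{1/2}.
\]
Next we need $\bar r_{i,s}\le D_{\mathcal X}$. This follows by induction: each $\hat r_{i,s}$ is a maximum of $r_\epsilon\le D_{\mathcal X}$, the previous values, and distances $\|x_{i,s}-x_0\|\le D_{\mathcal X}$ within $\mathcal X$. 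Averaging with the nonnegative weights $w_{ij}$ preserves the bound. Hence $\eta_{i,s}^2\|g_{i,s}\|^2\le D_{\mathcal X}^2\|g_{i,s}\|^2/G_{i,s}$.

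\textbf{Log-sum step.} Since $G_{i,s}-G_{i,s-1}=\|g_{i,s}\|^2$, we have $\frac{G_{i,s}-G_{i,s-1}}{G_{i,s}}\le \log(G_{i,s}/G_{i,s-1})$ via $1-1/x\le\log x$. Summing for $s=1,\dots,K-1$ telescopes to $\log(G_{i,K-1}/G_{i,0})$. Combining with the previous step and dividing by $K$ produces the claimed bound.

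The main obstacle is the boundary term $s=0$, which the telescoping does not cover. Its contribution is $\|g_{i,0}\|^2/G_{i,0}\le 1$, and it does not appear in the statement. Two fixes are available. The first is to shift the indexing: $\widetilde X_k$ involves only $E_0,\dots,E_{k-1}$, and one could try to absorb $E_0$ by bounding $\widetilde X_1$ separately. The second, which is what we expect the intended argument uses, is to telescope from $G_{i,-1}=r_\epsilon^2$ instead, so that the $s=0$ term is also covered by $\log(G_{i,0}/G_{i,-1})$. In that case we would check whether the stated denominator $G_{i,0}$ should really be $G_{i,-1}$, or whether the $s=0$ term can be dropped, for instance because only indices $s\le K-2$ enter and the inequality is loose enough. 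We would try the index shift first. If it fails, we would state the bound with $G_{i,-1}$ (equivalently, add one constant term), which is harmless for the later use in bounding $\mathcal E_t$.
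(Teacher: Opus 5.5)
Your argument follows the paper's proof step for step. The shared chain is: the recursion $\|\widetilde X_{k+1}\|_F\le\sigma\|\widetilde X_k\|_F+(\text{step perturbation})$, summation with factor $1/(1-\sigma)$, Cauchy--Schwarz, $\bar r_{i,k}\le D_{\mathcal X}$, and a log-sum bound on $\sum_s\|g_{i,s}\|^2/G_{i,s}$. The one cosmetic difference is the projection step. The paper uses $\|(I-J)\Pi^{\mathrm{row}}_{\mathcal X}(Y)\|_F\le\|(I-J)Y\|_F$; this is true because $\|(I-J)X\|_F^2=\min_c\sum_i\|x_i-c\|^2$, though the paper does not justify it. Your use of $z_{i,k}\in\mathcal X$ is equally valid and more self-contained. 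Note that you reuse $E_k$ for the perturbation matrix, whereas the paper's $E_k$ is $\|\widetilde X_k\|_F$.

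The boundary term you flag is a genuine defect, but it lies in the statement, not in your argument. The paper handles it by invoking $\sum_{k=0}^{K-1}a_k/A_k\le\log(A_{K-1}/A_0)$. That inequality is false already for $K=1$, where it reads $a_0/A_0\le 0$; only the terms $k\ge1$ are covered.

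Your first fix, the index shift, therefore cannot succeed, because the inequality as stated is false. The matrix $\widetilde X_k$ is driven by $g_{i,0},\dots,g_{i,k-1}$, while the right-hand side only sees $g_{i,1},\dots,g_{i,K-1}$. Here is a concrete counterexample.
\begin{itemize}
\item[] Setup: $n=2$, $d=1$, $\mathcal X=[-1,1]$, $x_0=0$, $r_\epsilon=0.1$, $F_1(x)=\max\{0,-x\}$, $F_2\equiv 0$.
\item[] Step $t=0$: $\mu_{i,0}=0.1$, $g_{1,0}=-1/2$ for either sign of $v_{1,0}$, and $g_{2,0}=0$. Hence $x_{1,1}=0.05/\sqrt{0.26}\approx 0.098$ and $x_{2,1}=0$.
\item[] Step $t=1$: $\bar r_{i,1}=0.1$ and $\mu_{i,1}=0.1/\sqrt2<x_{1,1}$. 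Both queries of agent~1 then lie where $F_1\equiv0$, so $g_{1,1}=g_{2,1}=0$ and $G_{i,1}=G_{i,0}$.
\end{itemize}
For $K=2$ the right-hand side is $0$, while the left-hand side is $\tfrac12\|\widetilde X_1\|_F>0$. This holds for every admissible $W$.

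Your second fix gives the correct statement. Telescoping from $G_{i,-1}=r_\epsilon^2$ yields
\[
\frac1K\sum_{k=0}^{K-1}\|\widetilde X_k\|_F\le\frac{D_{\mathcal X}}{1-\sigma}\sqrt{\frac1K\sum_{i=1}^n\log\Bigl(\frac{G_{i,K-1}}{r_\epsilon^2}\Bigr)}.
\]
Downstream, this only replaces $\log(1+TcL^2d/\min_iG_{i,0})$ by $\log(1+TcL^2d/r_\epsilon^2)$ in the explicit expected-rate proposition, so the $\widetilde{\mathcal O}$ rate is unaffected. Drop the index-shift attempt and present the corrected bound.
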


The previous lemma quantifies how much disagreement can accumulate on
average along the trajectory. The next bound feeds that control back into the
decomposition and turns it into an explicit estimate for the disagreement
penalty \(\mathcal E_t\).

\begin{lemma}[Consensus error]\label{lem:consensus-exp}
For every \(t\ge 1\),
\[
\mathcal E_t
\le
\frac{2}{n}\sum_{k=0}^{t-1}\bar r_k\,\|\mathbf G_k\|_F\,E_k
+
\frac{L}{\sqrt n}\sum_{k=0}^{t-1}\bar r_k E_k.
\]
where \(E_k:=\|\widetilde X_k\|_F\) and \(\mathbf G_k\in\mathbb R^{n\times d}\)
stacks the row vectors \(g_{i,k}^\top\).
\end{lemma}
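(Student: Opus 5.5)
We bound the two sums in $\mathcal E_t$ separately; each reduces to the Frobenius norm $E_k=\|\widetilde X_k\|_F$ via Cauchy--Schwarz and the spectral properties of $W$.

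\emph{Second sum.} For each $k$, Cauchy--Schwarz over $i$ gives
\[
\frac1n\sum_{i=1}^n\|x_{i,k}-\bar x_k\|\le \frac{1}{\sqrt n}\Bigl(\sum_{i=1}^n\|x_{i,k}-\bar x_k\|^2\Bigr)^{1/2}=\frac{1}{\sqrt n}\|\widetilde X_k\|_F,
\]
because the rows of $\widetilde X_k=(I-J)X_k$ are exactly $x_{i,k}-\bar x_k$. Multiplying by $L\bar r_k$ and summing over $k$ yields the term $\frac{L}{\sqrt n}\sum_k\bar r_k E_k$.

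\emph{First sum.} Write $Z_k:=WX_k$, whose rows are $z_{i,k}^\top$. Then $X_k-Z_k=(I-W)X_k$. Since $W$ is doubly stochastic, $(I-W)J=0$, so
\[
(I-W)X_k=(I-W)(I-J)X_k=(I-W)\widetilde X_k.
\]
Because $W$ is symmetric and doubly stochastic, its eigenvalues lie in $[-1,1]$, so $\|I-W\|_2\le 2$. Hence
\[
\|X_k-Z_k\|_F\le \|I-W\|_2\,\|\widetilde X_k\|_F\le 2E_k.
\]
Next, $\sum_i\langle g_{i,k},x_{i,k}-z_{i,k}\rangle$ is the trace inner product $\langle \mathbf G_k,X_k-Z_k\rangle_F$. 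Cauchy--Schwarz bounds it by $\|\mathbf G_k\|_F\cdot 2E_k$. Multiplying by $\bar r_k/n$ and summing over $k$ gives $\frac2n\sum_k\bar r_k\|\mathbf G_k\|_F E_k$.

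Adding the two bounds proves the lemma. The only step needing care is the spectral bound $\|I-W\|_2\le 2$ together with the identity $(I-W)=(I-W)(I-J)$. A slightly sharper alternative is to write $(I-W)(I-J)=(I-J)-(W-J)$, which gives $\|X_k-Z_k\|_F\le(1+\sigma)E_k\le 2E_k$; either route gives the constant $2$. The argument is deterministic and does not need Lemma~\ref{lem:avg-consensus-A}, which is used only afterwards to control the $E_k$.
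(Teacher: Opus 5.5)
Your proof is correct and follows the paper's argument essentially verbatim: Frobenius Cauchy--Schwarz with $X_k-Z_k=(I-W)(I-J)X_k$ and $\|X_k-Z_k\|_F\le 2E_k$ for the mixing term, and row-wise Cauchy--Schwarz giving $\sum_i\|x_{i,k}-\bar x_k\|\le\sqrt n\,E_k$ for the disagreement term. You also supply the justifications the paper leaves implicit, namely $(I-W)J=0$ and $\|I-W\|_2\le 2$. Your sharper $(1+\sigma)$ variant is likewise valid once you note $(W-J)J=0$.
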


Combining the preceding lemmas with the decomposition
\eqref{eq:main-decomposition} yields the main finite-time guarantee. The
result keeps the network term explicit, which is useful before passing to the
expected-rate statement.

\begin{theorem}[Function gap]\label{thm:function-gap}
Under Assumptions~\ref{ass:domain}--\ref{ass:oracle} and the hypotheses of
Lemma~\ref{lem:dec-noise}, on the event \(\Omega_{T,\delta}\) from
Lemma~\ref{lem:dec-noise}, for every \(1\le t\le T\),
\begin{equation}\label{eq:function-gap-divided}
f(\tilde x_t)-f(x^\star)
\le
\frac{16D_{\mathcal{X}}\theta_{T,\delta}}{R_t/\bar r_t}
\bigl(\sqrt{\hat G_T}+Ld+L\sqrt{dT}\bigr)
\;+\;
\frac{\mathcal E_t}{R_t}.
\end{equation}
\end{theorem}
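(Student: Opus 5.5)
We will obtain the bound by combining the decomposition \eqref{eq:main-decomposition} with Lemmas~\ref{lem:weighted-regret}, \ref{lem:dec-noise} and \ref{lem:smoothing-bias}, leaving $\mathcal E_t$ untouched. On the event $\Omega_{T,\delta}$ and for $1\le t\le T$, we divide \eqref{eq:main-decomposition} by $R_t>0$. Note that $R_t\ge \bar r_0 \ge r_\epsilon>0$, since every $\hat r_{i,k}\ge r_\epsilon$ and $W$ is doubly stochastic. This gives
\[
f(\tilde x_t)-f(x^\star)\le \frac{\mathcal W_t+\mathcal N_t+\mathcal B_t}{R_t}+\frac{\mathcal E_t}{R_t}.
\]
It then suffices to show that $\mathcal W_t+\mathcal N_t+\mathcal B_t\le 16D_{\mathcal X}\theta_{T,\delta}\bar r_t(\sqrt{\hat G_T}+Ld+L\sqrt{dT})$.

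We first bring the three bounds to common quantities using monotonicity. The sequence $\hat r_{i,t}$ is nondecreasing in $t$, because it is a running maximum of $\bar r_{i,t-1}$, and $\bar r_{i,t-1}$ itself is a convex combination of the earlier $\hat r_{j,t-1}$. Concretely, if $\hat r_{j,t}\ge \hat r_{j,t-1}$ for all $j$, then $\bar r_{i,t}\ge \bar r_{i,t-1}$. An induction on $t$ then shows that $\bar r_{i,t}$ and $\bar r_t$ are nondecreasing. Hence $\bar r_{t-1}\le \bar r_t$. In the same way, $G_{i,t}$ is nondecreasing, so $\hat G_{t-1}\le \hat G_T$. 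Finally, $\theta_{t,\delta}\le\theta_{T,\delta}$.

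With these facts, Lemma~\ref{lem:weighted-regret} gives $\mathcal W_t\le 3D_{\mathcal X}\bar r_t\sqrt{\hat G_T}$. For Lemma~\ref{lem:dec-noise} we use $\sqrt{a+b}\le\sqrt a+\sqrt b$ to obtain
\[
\mathcal N_t\le 8D_{\mathcal X}\bar r_t\bigl(\sqrt{\theta_{T,\delta}}\sqrt{\hat G_T}+2Ld\,\theta_{T,\delta}\bigr).
\]
Lemma~\ref{lem:smoothing-bias} gives $\mathcal B_t\le 4D_{\mathcal X}L\bar r_t\sqrt{dT}$.

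The remaining step is to absorb the constants and the powers of $\theta$, and this is the only delicate point. We need $\theta_{T,\delta}\ge 1$ so that $\sqrt{\theta}\le\theta$ and so that constants such as $3$ and $4$ can be multiplied by $\theta$. Since $\delta<1$ and $T\ge1$, we have $\log(6T/\delta)>\log 6>1.79$, so $60\log(6T/\delta)>107$ and $\theta_{T,\delta}>\log 107>4\ge 1$. Collecting coefficients:
\begin{itemize}
\item the $\sqrt{\hat G_T}$ term has coefficient at most $(3+8)\theta=11\theta\le 16\theta$;
\item the $Ld$ term has coefficient at most $16\theta$;
\item the $L\sqrt{dT}$ term has coefficient at most $4\le 16\theta$.
\end{itemize}
Summing and dividing by $R_t$ gives the first term of \eqref{eq:function-gap-divided}, written as $\bar r_t/R_t = 1/(R_t/\bar r_t)$.

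The main obstacle is ensuring that the monotonicity of $\bar r_t$ actually holds, which is needed to replace $\bar r_{t-1}$ by $\bar r_t$. If the induction above fails for some reason, the fallback is to use $\bar r_{t-1}\le \max_{i,k\le t}\hat r_{i,k}$ instead. That would change only the meaning of the prefactor and not the structure of the argument.
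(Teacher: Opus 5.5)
Your proposal follows the paper's proof step for step. You divide \eqref{eq:main-decomposition} by $R_t$ and insert Lemmas~\ref{lem:weighted-regret}, \ref{lem:dec-noise} and \ref{lem:smoothing-bias}. You then replace $\bar r_{t-1},\hat G_{t-1},\theta_{t,\delta},t$ by $\bar r_t,\hat G_T,\theta_{T,\delta},T$, split the square root, and absorb constants using $\theta_{T,\delta}\ge 1$. Your explicit coefficient count ($11\theta$, $16\theta$, $4$) and your checks $\theta_{T,\delta}>\log 107$ and $R_t\ge r_\epsilon>0$ are correct, and more careful than the paper's ``harmless slack'' remark.

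The one step that fails as written is your proof of $\bar r_{t-1}\le\bar r_t$, which the paper simply asserts. The per-agent radii are \emph{not} monotone. In centralized POEM/DoG the radius is a running maximum of distances, but $\hat r_{i,t}=\max\{\bar r_{i,t-1},\|x_{i,t}-x_{i,0}\|\}$ is not. The previous maximum is carried forward only through the average $\bar r_{i,t-1}=\sum_j w_{ij}\hat r_{j,t-1}$.

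Suppose $\hat r_{i,t-1}$ is attained by the distance term and exceeds a neighbour's value with $w_{ij}>0$. Then $\bar r_{i,t-1}<\hat r_{i,t-1}$. If the consensus step then pulls $x_{i,t}$ back so that $\|x_{i,t}-x_{i,0}\|\le\bar r_{i,t-1}$, you get $\hat r_{i,t}<\hat r_{i,t-1}$. This is realizable already with $n=2$ and $W=J$. Hence the premise ``$\hat r_{j,t}\ge\hat r_{j,t-1}$ for all $j$'' of your induction cannot be propagated.

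What is true, and all the theorem needs, is monotonicity of the network average. It follows in one line from column-stochasticity of $W$ and $\hat r_{j,t}\ge\bar r_{j,t-1}$ (Line~\ref{algo:r_update}):
\[
\bar r_t=\frac1n\sum_{i=1}^n\sum_{j=1}^n w_{ij}\hat r_{j,t}=\frac1n\sum_{j=1}^n\hat r_{j,t}\ge\frac1n\sum_{j=1}^n\bar r_{j,t-1}=\bar r_{t-1}.
\]
With this replacement your proof is complete. Your fallback should be discarded: bounding $\bar r_{t-1}$ by $\max_{i,k\le t}\hat r_{i,k}$ changes the prefactor, so it would not give \eqref{eq:function-gap-divided} as stated.
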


The final step is to show that, after choosing the most favorable weighted
output time and taking conditional expectation, the remaining disagreement
term contributes only the standard network factor \(1/(1-\sigma)\).

\begin{theorem}[Conditional convergence rate]\label{thm:expected-rate}
Under the assumptions of Theorem~\ref{thm:function-gap} and common
initialization \(x_{1,0}=\cdots=x_{n,0}\),
Then
\begin{equation}
\label{eq:expected-rate-tilde}
\mathbb E\!\left[f(\bar x_{\mathrm{out}})-f(x^\star)\mid \Omega_{T,\delta}\right]
=
\widetilde{\mathcal O}\!\left(
\frac{L D_{\mathcal{X}}\sqrt d}{\sqrt T}
\;+\;
\frac{L D_{\mathcal{X}}\sqrt d}{(1-\sigma)\sqrt T}
\right),
\end{equation}
where
\[
\tau_T:=\argmax_{1\le t\le T}\frac{R_t}{\bar r_t},
\qquad
\bar x_{\mathrm{out}}:=\tilde x_{\tau_T},
\]
and \(\Prob(\Omega_{T,\delta})\ge 1-\delta\).
\end{theorem}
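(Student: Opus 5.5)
We start from Theorem~\ref{thm:function-gap} evaluated at \(t=\tau_T\) on \(\Omega_{T,\delta}\), bound each piece by deterministic or expectation-controlled quantities, and then take conditional expectation. There are three ingredients. First, an a.s.\ bound on the accumulator \(\hat G_T\). Second, a lower bound on \(R_{\tau_T}/\bar r_{\tau_T}\) of order \(T/\log\). Third, control of the disagreement ratio \(\mathcal E_{\tau_T}/R_{\tau_T}\) via Lemmas~\ref{lem:consensus-exp} and \ref{lem:avg-consensus-A}.

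For the first ingredient, the two-point estimator satisfies \(\|g_{i,k}\|\le \frac{d}{2\mu_{i,k}}\cdot 2L\mu_{i,k}=Ld\) pointwise, by Assumption~\ref{ass:lipschitz}. Hence \(\hat G_T\le r_\epsilon^2+(T+1)L^2d^2\). This crude bound gives \(\sqrt{\hat G_T}\lesssim Ld\sqrt T\), which is too large by a factor \(\sqrt d\). So instead I would use the conditional second moment \(\E\|g_{i,k}\|^2\le cL^2 d\) (standard concentration on the sphere), together with Jensen, to get \(\E[\sqrt{\hat G_T}]\le \sqrt{\E\sum_i G_{i,T}}\lesssim L\sqrt{ndT}+r_\epsilon\). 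Dividing by \(\Prob(\Omega_{T,\delta})\ge1-\delta\) turns this into a conditional bound, and the \(\sqrt n\) is absorbed into \(\widetilde{\mathcal O}\); if that absorption is not acceptable, I would instead use \(\sqrt{\hat G_T}\le\sqrt{\sum_i G_{i,T}}\) more carefully through the averaging in the noise lemma. The terms \(Ld+L\sqrt{dT}\) are already of the right order once divided by \(T\).

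For the second ingredient, \(\bar r_k\) is nondecreasing in \(k\): the \(\hat r_{i,k}\) are maxima, mixing preserves averages, and \(\bar r_k=\frac1n\sum_i\hat r_{i,k}\) because \(W\) is doubly stochastic. It is also bounded in \([r_\epsilon,D_{\mathcal X}]\). I would then invoke the standard DoG/POEM lemma: for a nondecreasing positive sequence, \(\max_{t\le T}\sum_{k<t}\bar r_k/\bar r_t\ge \frac{1}{e}\bigl(T/\log_+(e\,\bar r_T/r_\epsilon)\bigr)-1\). Consequently \(R_{\tau_T}/\bar r_{\tau_T}=\Omega(T/\log(D_{\mathcal X}/r_\epsilon))\), and the first term of \eqref{eq:function-gap-divided} becomes \(\widetilde{\mathcal O}(LD_{\mathcal X}\sqrt{d/T})\).

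The third ingredient, the consensus term, is where I expect the main obstacle, because \(\tau_T\) is random and the bound must be uniform. I would bound \(\mathcal E_{\tau_T}\) crudely by Lemma~\ref{lem:consensus-exp}, using \(\bar r_k\le\bar r_{\tau_T}\) and \(\|\mathbf G_k\|_F\le \sqrt n Ld\), which gives \(\mathcal E_{\tau_T}\le 3L d\sqrt n\,\bar r_{\tau_T}\sum_{k<T}E_k\). Then \(\mathcal E_{\tau_T}/R_{\tau_T}\le 3Ld\sqrt n\,\sum_{k<T}E_k\big/(R_{\tau_T}/\bar r_{\tau_T})\), and Lemma~\ref{lem:avg-consensus-A} with \(K=T\) bounds \(\frac1T\sum E_k\le \frac{D_{\mathcal X}}{1-\sigma}\sqrt{\frac{n}{T}\log(1+TL^2d^2/r_\epsilon^2)}\). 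Combined with the lower bound from the second ingredient, this gives a \(\frac{LD_{\mathcal X}}{(1-\sigma)\sqrt T}\) rate up to logs, but with a factor \(d\) instead of \(\sqrt d\) and with powers of \(n\). To recover \(\sqrt d\), I would replace \(\|\mathbf G_k\|_F\) by its conditional second moment \(\sqrt{n}L\sqrt{cd}\) and apply Cauchy--Schwarz, \(\E\sum_k\|\mathbf G_k\|_F E_k\le \sqrt{\E\sum\|\mathbf G_k\|_F^2}\sqrt{\E\sum E_k^2}\). Here \(E_k\) is bounded deterministically via Lemma~\ref{lem:avg-consensus-A}, or via its proof, which actually controls \(\sum E_k^2\). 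Conditioning on \(\Omega_{T,\delta}\) costs at most a factor \(1/(1-\delta)\). Collecting the terms yields \eqref{eq:expected-rate-tilde}, with \(n\), \(\delta\), and logarithmic factors hidden in \(\widetilde{\mathcal O}\).
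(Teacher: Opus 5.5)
Your skeleton matches the paper's proof. You apply Theorem~\ref{thm:function-gap} at $t=\tau_T$ and use the DoG monotone-sequence lemma for $R_{\tau_T}/\bar r_{\tau_T}$; your check, via $\bar r_k=\frac1n\sum_i\hat r_{i,k}$, that $\bar r_k$ is nondecreasing is exactly what is needed. You control $\sqrt{\hat G_T}$ by a second-moment bound plus Jensen, and the disagreement by Lemmas~\ref{lem:consensus-exp} and~\ref{lem:avg-consensus-A} after replacing $\bar r_k/R_{\tau_T}$ by $\bar r_{\tau_T}/R_{\tau_T}$. Conditioning nonnegative quantities on $\Omega_{T,\delta}$ costs a factor $1/(1-\delta)$.

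The step that does not deliver the statement is your first ingredient. Bounding $\hat G_T=\max_i G_{i,T}$ by $\sum_i G_{i,T}$ leaves a term of order $\sqrt n\,LD_{\mathcal X}\sqrt d/\sqrt T$. Since $\sqrt n$ is polynomial, not logarithmic, absorbing it into $\widetilde{\mathcal O}$ proves a strictly weaker theorem than the $n$-free rate claimed.

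Your fallback does not remove it. Lemma~\ref{lem:dec-noise} could indeed be run with the agent average $\frac1n\sum_i G_{i,t-1}$. But the maximum also enters through Lemma~\ref{lem:weighted-regret}, where the common weight $\sqrt{\hat G_k}$ is what permits Jensen across agents in the telescoping. And Theorem~\ref{thm:function-gap}, your starting point, already contains $\hat G_T$.

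The missing ingredient is a bound on $\E[\max_i\|g_{i,k}\|^2\mid\mathcal F_{k-1}]$ that is $n$-free up to logarithms. Second moments plus $\|g_{i,k}\|\le Ld$ only give $\min\{ncL^2d,\,L^2d^2\}$, which is precisely your $\sqrt n$-versus-$\sqrt d$ dilemma. The paper does not derive this either: it asserts $\E[\hat G_T]\le r_\epsilon^2+TcL^2d$ by appeal to a ``conditional max-moment assumption'' that the theorem never states.

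You can supply it by concentration on the sphere. The map $v\mapsto F_i(x+\mu v;\xi)-F_i(x-\mu v;\xi)$ is odd and $2L\mu$-Lipschitz, hence centered sub-Gaussian with variance proxy $O(L^2\mu^2/d)$. So $\|g_{i,k}\|^2/(L^2d)$ is sub-exponential with an absolute-constant parameter, and a union bound over agents gives $\E[\max_i\|g_{i,k}\|^2\mid\mathcal F_{k-1}]=O(L^2d\log(2n))$. Then $\E[\hat G_T]\le r_\epsilon^2+O(TL^2d\log(2n))$, and a $\log n$ can legitimately be hidden.

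On the consensus term your route genuinely differs from the paper's, and it is sound once made precise. The paper uses that $E_k$ is $\mathcal F_{k-1}$-measurable, so $\E[\|\mathbf G_k\|_FE_k]\le\sqrt{ncL^2d}\,\E[E_k]$, and then pushes Lemma~\ref{lem:avg-consensus-A} through Jensen on $\sqrt{\cdot}$ and $\log$. Your Cauchy--Schwarz over $(k,\omega)$ needs only unconditional second moments. However, it requires an $\ell^2$ bound on $(E_k)$, which Lemma~\ref{lem:avg-consensus-A} itself does not provide: its $\ell^1$ bound together with $E_k\le\sqrt n D_{\mathcal X}$ yields only a $T^{-1/4}$ rate.

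The $\ell^2$ bound does follow from the lemma's proof. By convexity, $E_{k+1}\le\sigma E_k+U_k$ gives $E_{k+1}^2\le\sigma E_k^2+U_k^2/(1-\sigma)$. With $E_0=0$ this yields, deterministically, $\sum_kE_k^2\le(1-\sigma)^{-2}\sum_kU_k^2\le nD_{\mathcal X}^2(1-\sigma)^{-2}\log\bigl(1+TL^2d^2/r_\epsilon^2\bigr)$. Combined with $\E\sum_k\|\mathbf G_k\|_F^2\le TncL^2d$, the prefactor $2/n$ cancels both factors $\sqrt n$, and you get $\widetilde{\mathcal O}\bigl(LD_{\mathcal X}\sqrt d/((1-\sigma)\sqrt T)\bigr)$.

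Finally, your crude bound contains a slip: $\frac2n\|\mathbf G_k\|_F\le 2Ld/\sqrt n$, so the prefactor is $3Ld/\sqrt n$, not $3Ld\sqrt n$. Hence the consensus part needs no hidden powers of $n$, and the only $n$-dependence left in your argument is the one coming from $\hat G_T$.
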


For this choice of \(\tau_T\), the monotone-sequence bound used in the POEM
analysis yields
\[
\frac{R_{\tau_T}}{\bar r_{\tau_T}}
\ge
\frac{T}{e\bigl(1+\log(D_{\mathcal{X}}/r_\epsilon)\bigr)}.
\]
\begin{remark}
As $\delta$ approaches $0$, we have $\Omega_{T,\delta} \to \Omega_{T,0}$, which implies
\[
\mathbb{E}\!\left[f(\bar{x}_T)-f(x_\star)\mid \Omega_{T,\delta}\right]
\to
\mathbb{E}\!\left[f(x_T)-f(x_\star)\mid \Omega_{T,0}\right]
=
f(x_T)-f(x_\star).
\]
\end{remark}
\begin{corollary}[Stochastic zeroth-order and communication complexity]\label{cor:expected-complexity}
Under the assumptions of Theorem~\ref{thm:expected-rate}, to guarantee
\(
\mathbb E\!\left[f(\bar x_{\mathrm{out}})-f(x^\star)\mid \Omega_{T,\delta}\right]
\le \varepsilon,
\)
it suffices to take
\begin{equation}
\label{eq:T-eps-comm}
T_\varepsilon
=
\widetilde{\mathcal O}\!\left(
\frac{L^2 D_{\mathcal{X}}^2 d}{\varepsilon^2}
\left(
1+\frac{1}{(1-\sigma)^2}
\right)
\right).
\end{equation}
Since each iteration of Algorithm~\ref{alg:dpoem} query zeroth order oracle two times and two communication rounds, the per-agent stochastic zeroth-order query
complexity,and the communication complexity are
\[
N_{\mathrm{comm}}(T_\varepsilon)=N_{\mathrm{ZO}}^{(i)}(T_\varepsilon)
=
\widetilde{\mathcal O}\!\left(
\frac{L^2 D_{\mathcal{X}}^2 d}{\varepsilon^2}
\left(
1+\frac{1}{(1-\sigma)^2}
\right)
\right),
\nonumber
\label{eq:complexity-agent}
\]
\end{corollary}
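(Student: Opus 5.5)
The corollary follows by inverting the rate of Theorem~\ref{thm:expected-rate} and then counting queries and messages per iteration. The plan has three steps.

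\textbf{Step 1: make the rate explicit.} I would write \eqref{eq:expected-rate-tilde} as
\[
\mathbb E\!\left[f(\bar x_{\mathrm{out}})-f(x^\star)\mid \Omega_{T,\delta}\right]
\le C_T\,\frac{L D_{\mathcal X}\sqrt d}{\sqrt T}\Bigl(1+\frac{1}{1-\sigma}\Bigr).
\]
Here $C_T$ collects the factors hidden in $\widetilde{\mathcal O}$. These are $\theta_{T,\delta}$, the factor $e(1+\log(D_{\mathcal X}/r_\epsilon))$ coming from the lower bound on $R_{\tau_T}/\bar r_{\tau_T}$ stated after Theorem~\ref{thm:expected-rate}, and the $\sqrt{\sum_i\log(G_{i,T-1}/G_{i,0})}$ factor from Lemma~\ref{lem:avg-consensus-A}. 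All of them grow at most polylogarithmically in $T$, $1/\delta$, $D_{\mathcal X}/r_\epsilon$, $L$ and $d$. For the last factor this needs a check: by Assumption~\ref{ass:lipschitz} and the estimator on Line~\ref{algo:g_update}, $\|g_{i,t}\|\le Ld$, so $G_{i,T-1}\le r_\epsilon^2+TL^2d^2$ and the logarithm is $\mathcal O(\log(TLd/r_\epsilon))$.

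\textbf{Step 2: invert the bound.} Using $1+\frac{1}{1-\sigma}\le \sqrt2\,\bigl(1+\frac{1}{(1-\sigma)^2}\bigr)^{1/2}$, it suffices to have
\[
\sqrt T\ \ge\ \sqrt2\,C_T\,\frac{L D_{\mathcal X}\sqrt d}{\varepsilon}\Bigl(1+\frac{1}{(1-\sigma)^2}\Bigr)^{1/2}.
\]
Squaring this inequality gives \eqref{eq:T-eps-comm} up to the factor $C_T^2$.

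\textbf{Main obstacle.} $C_T$ itself depends on $T$, so the inequality is implicit. I would resolve it in the standard way. Set $T_0=L^2D_{\mathcal X}^2 d\,\varepsilon^{-2}(1+(1-\sigma)^{-2})$ and try $T=c\,T_0\log^{k}(T_0)$ for a suitable polylog power $k$ and constant $c$. Since $C_T^2$ grows only polylogarithmically while $T$ grows linearly, the inequality $T\ge 2C_T^2T_0$ holds once the constant and the power are large enough. This leaves $T_\varepsilon=\widetilde{\mathcal O}(T_0)$.

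\textbf{Step 3: count per iteration.} Each iteration of Algorithm~\ref{alg:dpoem} makes exactly two oracle calls per agent, $F_i(x_{i,t}\pm\mu_{i,t}v_{i,t};\xi_{i,t})$, by Assumption~\ref{ass:oracle}. It also makes two neighbor exchanges: the scalar $\hat r$ exchange on Line~\ref{algo:r_mix} and the vector $x$ exchange on Line~\ref{algo:z_update}. Hence $N_{\mathrm{ZO}}^{(i)}(T_\varepsilon)=2T_\varepsilon$ and $N_{\mathrm{comm}}(T_\varepsilon)=2T_\varepsilon$. Absorbing the factor $2$ into $\widetilde{\mathcal O}$ gives the stated complexities.
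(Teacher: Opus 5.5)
Your proposal is correct and follows the paper's essentially implicit argument: invert the $\widetilde{\mathcal O}$ rate of Theorem~\ref{thm:expected-rate} using $(1+\tfrac{1}{1-\sigma})^2\le 2\bigl(1+\tfrac{1}{(1-\sigma)^2}\bigr)$, then multiply by the two oracle calls and two communication rounds per iteration. Your explicit handling of the $T$-dependence of the logarithmic factors is a detail the paper leaves unstated. Two small imprecisions, neither affecting the conclusion: the factor $\sqrt{\sum_{i}\log(G_{i,T-1}/G_{i,0})}$ carries a $\sqrt n$ that is polylogarithmic only after cancelling against the $1/\sqrt n$ normalization in Lemma~\ref{lem:consensus-exp}, so the hidden quantity is $\sqrt{\log(1+TcL^2d/\min_i G_{i,0})}$ as in Proposition~\ref{prop:expected-rate-explicit}; and the $\widetilde{\mathcal O}$ also hides lower-order $T^{-1}$ terms such as $LD_{\mathcal X}d/T$, which are dominated once $\varepsilon\le LD_{\mathcal X}$.
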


\section{Numerical Experiments}
We evaluate D-POEM on decentralized stochastic hinge-loss binary classification:
\[
\min_{x\in\mathcal{X}} f(x)\coloneq \frac{1}{n}\sum_{i=1}^{n}\mathbb{E}_{(a,b)}\!\left[f_i(x;a,b)\right],
\]
where $f_i(x;a,b)=\max\{0,\,1-b\langle a,x\rangle\}$, with \((a,b)\in\mathbb{R}^d\times\{\pm1\}\) 
sampled uniformly from the local  dataset of agent \(i\). We conduct experiments 
on the \texttt{mushrooms} \((d=112,\, N=8124)\), \texttt{a9a} \((d=123,\, N=32{,}561)\), 
and \texttt{w8a} \((d=300,\, N=49{,}749)\) datasets. The data are partitioned i.i.d.\ across the 20 agents. The communication network 
is modeled as an Erd\H{o}s--R\'enyi graph with edge probability \(0.25\), 
and the feasible set \(\mathcal{X}\) is the Euclidean ball of radius \(1\).
In our experiments D-POEM  with $r_{\epsilon} = 0.1$ compares against two variants of the distributed subgradient-free algorithm(DSF)\cite{wang2019distributed}: 
the tuned version \texttt{DSF-T} and the default-setting version \texttt{DSF-D}.

\begin{figure}[t]
\centering
\includegraphics[width=\textwidth]{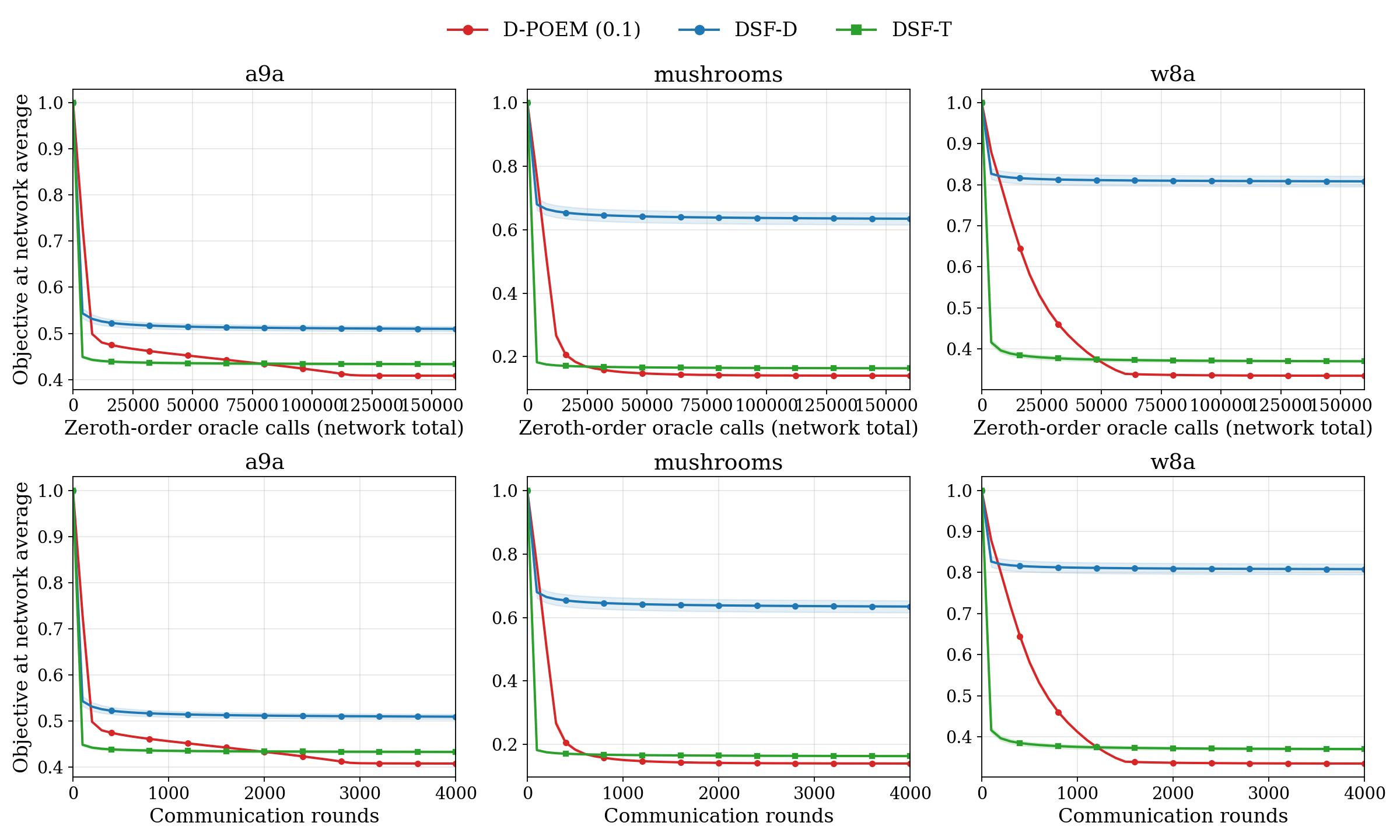}
\caption{Twenty-agent  benchmark on \texttt{mushrooms}, \texttt{a9a},
and \texttt{w8a}. The top row plots objective at the network average
versus total network-wide zeroth-order oracle calls; the bottom row plots
the same objective versus communication rounds.}
\label{fig:numerical-20agents}
\end{figure}

Figure~\ref{fig:numerical-20agents} shows that the DSF methods converge
faster in the early stage. However, \texttt{DSF-D} performs noticeably
worse than D-POEM. As the iterations 
continue, D-POEM eventually achieves better performance than 
\texttt{DSF-T}. This advantage is substantial on the 
more challenging \texttt{w8a} task. Meanwhile, \texttt{DSF-T} still 
requires careful tuning of the step size and perturbation scale. 
These results indicate that D-POEM is a robust method that performs
well without parameter tuning.

\section{Conclusion and Discussion}
We studied decentralized stochastic convex optimization with zeroth-order
oracle over a connected network and proposed D-POEM, a parameter-free
distributed method that combines symmetric two-point gradient estimation with
adaptive smoothing radii and stepsizes. The method requires only local
function evaluations together with one scalar consensus step for the radius
proxy and one vector consensus step for the primal iterate at each round. For
convex Lipschitz objectives on a compact domain, we established a finite-time
bound that separates the optimization term from the network disagreement term,
and derived the conditional rate
\[
\widetilde{\mathcal O}\!\left(
\frac{L D_{\mathcal{X}}\sqrt d}{\sqrt T}
\;+\;
\frac{L D_{\mathcal{X}}\sqrt d}{(1-\sigma)\sqrt T}
\right).
\]
This yields the expected \(\varepsilon^{-2}\) complexity scaling, up to
logarithmic factors and $d$ dependence, for both stochastic zeroth-order oracle calls and
communication rounds. The experiments on decentralized classification problems
also indicate that D-POEM is robust without manual tuning and becomes
competitive with tuned distributed zeroth-order baselines after a moderate
number of iterations.

\paragraph{Discussion.}
Several questions remain open. On the theoretical side, lower bounds 
for decentralized zeroth-order convex optimization are still lacking,
and establishing them would help assess the efficiency of our method
in terms of both communication and oracle complexity. It would also 
be valuable to sharpen the dependence on the network 
connectivity factor $1-\sigma$, and derive fully high-probability 
guarantees for the final output. On the algorithmic side, our 
method requires two consensus rounds per iteration. Although one 
of them is used only for the radius, which is a scalar and 
therefore relatively cheap to communicate, eliminating this 
step would further reduce communication cost. The experiments 
also suggest that the basedline methods can still be improved, as 
carefully tuned baselines may converge faster in the early stage. 
Promising directions for future work include communication-efficient
 variants, extensions to time-varying or directed networks, 
 and broader parameter-free designs for weaker-feedback oracle 
 models or nonconvex objectives.

\begin{credits}
\subsubsection{\discintname}
 The authors have no competing interests to declare that are
relevant to the content of this article. 

\end{credits}

\appendix
\section{Proofs}\label{app:proofs}
\subsection{Auxilary Lemma}
\begin{lemma}[\cite{shamir2017optimal}, Lemma 10]\label{eq:norm-grad}
Under Assumption \ref{ass:lipschitz}, D-POEM (Algorithm \ref{alg:dpoem}) holds that
\[
\lVert g_t \rVert \le Ld
\qquad \text{and} \qquad
\mathbb{E}\!\left[\lVert g_t \rVert^2\right] \le cL^2 d,
\]
where \(c>0\) is a numerical constant.
\end{lemma}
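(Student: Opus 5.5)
The statement to prove is the auxiliary Lemma~\ref{eq:norm-grad}: for the symmetric two-point estimator $g_t$ (agent index suppressed), with query points in $\mathcal X^+$, we have $\|g_t\|\le Ld$ almost surely and $\E\|g_t\|^2\le cL^2d$. I will fix the realization of $\xi_{i,t}$ and of the radius $\mu:=\mu_{i,t}$, and write $h(y):=F_i(y;\xi_{i,t})$. By Assumption~\ref{ass:lipschitz}, $h$ is $L$-Lipschitz on $\mathcal X^+$.

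\textbf{Deterministic bound.} Since $\|v\|=1$,
\[
\|g_t\|=\frac{d}{2\mu}\,|h(x+\mu v)-h(x-\mu v)|\le \frac{d}{2\mu}\,L\,\|2\mu v\|=Ld .
\]
This holds pathwise. It does not depend on $\mu$, so the fact that $\mu$ is random and adaptive is irrelevant here.

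\textbf{Second moment.} I condition on the past and on $\xi_{i,t}$. The radius $\mu$ is measurable with respect to the past, since $\bar r_{i,t}$ uses only information up to time $t$, and $v$ is fresh and independent. So it suffices to show that, for a fixed $L$-Lipschitz $h$ and fixed $x,\mu$,
\[
\E_v\Bigl[\tfrac{d^2}{4\mu^2}\bigl(h(x+\mu v)-h(x-\mu v)\bigr)^2\Bigr]\le cL^2d .
\]
I follow Shamir's argument. For any constant $\alpha$, $(a-b)^2\le 2(a-\alpha)^2+2(b-\alpha)^2$. Because $v$ and $-v$ have the same law, this gives
\[
\E\bigl(h(x+\mu v)-h(x-\mu v)\bigr)^2\le 4\,\E\bigl(h(x+\mu v)-\alpha\bigr)^2 .
\]
Choose $\alpha=\E h(x+\mu v)$. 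The map $v\mapsto h(x+\mu v)$ is $\mu L$-Lipschitz on the sphere. By concentration of measure on $\mathbb S^{d-1}$, Lévy's lemma gives $\Prob(|\phi(v)-\E\phi|>s)\le 2\exp(-c'ds^2/(\mu L)^2)$. Integrating the tail yields $\mathrm{Var}(\phi)\le c''\mu^2L^2/d$. Combining,
\[
\E\|g_t\|^2\le \frac{d^2}{4\mu^2}\cdot 4c''\frac{\mu^2L^2}{d}=c''L^2d .
\]
Taking the outer expectation preserves the bound, so the claim holds with $c=c''$.

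\textbf{Main obstacle.} The key step is the variance bound for Lipschitz functions on the sphere. This is the point where the factor $d$ improves to $\sqrt d$ in norm. I would cite the standard concentration inequality (e.g.\ Ledoux) rather than reprove it. A minor subtlety is measurability: $\mu$ and $x$ must be determined before $v$ is drawn, which Algorithm~\ref{alg:dpoem} guarantees. The other requirement is that $x\pm\mu v$ lie in $\mathcal X^+$, which Assumption~\ref{ass:lipschitz} postulates.
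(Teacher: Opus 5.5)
Your proposal is correct and is essentially the argument behind the result the paper cites without proof (Shamir 2017, Lemma 10). It consists of the pathwise bound $\|g_t\|\le Ld$ from Lipschitzness, then the symmetrization $(a-b)^2\le 2(a-\alpha)^2+2(b-\alpha)^2$ with $\alpha=\mathbb{E}\,h(x+\mu v)$, combined with the $O(\mu^2L^2/d)$ variance bound for Lipschitz functions on the sphere. Because you condition on the past and on $\xi_{i,t}$, you also get the conditional form $\mathbb{E}[\|g_{i,t}\|^2\mid\mathcal{F}_{t-1}]\le cL^2d$, which is the version the paper uses later; the one caveat is that the concentration step needs $h$ to be $L$-Lipschitz on the whole sphere $x+\mu\mathbb{S}^{d-1}$, which is how Assumption~\ref{ass:lipschitz} must be read.
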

\subsection{Decomposition and primary bounds}

\begin{lemma}[Basic decomposition]\label{lem:basic-decomposition}
For every \(t\ge 1\),
\[
R_t\bigl(f(\tilde x_t)-f(x^\star)\bigr)
\le
\mathcal W_t+\mathcal N_t+\mathcal B_t+\mathcal E_t.
\]
\end{lemma}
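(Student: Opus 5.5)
We prove the decomposition by combining Jensen's inequality, convexity of the smoothed local objectives, and the standard bias bounds for uniform smoothing.

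\emph{Step 1 (Jensen).} Since \(\tilde x_t\) is a convex combination of \(\bar x_0,\dots,\bar x_{t-1}\) with weights \(\bar r_k/R_t\), and \(f\) is convex, we have
\[
R_t\bigl(f(\tilde x_t)-f(x^\star)\bigr)\le\sum_{k=0}^{t-1}\bar r_k\bigl(f(\bar x_k)-f(x^\star)\bigr).
\]
Each agent's iterate \(x_{i,k}\) lies in \(\mathcal X\), hence so does \(\bar x_k\). Because \(f_i\) is \(L\)-Lipschitz (Assumption~\ref{ass:lipschitz}, after taking expectations in \(\xi_i\)), we can write
\[
f(\bar x_k)=\frac1n\sum_i f_i(\bar x_k)\le\frac1n\sum_i f_i(x_{i,k})+\frac Ln\sum_i\|x_{i,k}-\bar x_k\|.
\]
This produces the second summand of \(\mathcal E_t\).

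\emph{Step 2 (smoothing).} Let \(f_{i,\mu}(x)=\E_{u\sim \mathrm{Unif}(\mathbb B)}f_i(x+\mu u)\) be the ball-smoothed function. It is convex, and the Lipschitz property gives
\[
|f_{i,\mu}-f_i|\le L\mu .
\]
Using this once at \(x_{i,k}\) and once at \(x^\star\), we get
\[
f_i(x_{i,k})-f_i(x^\star)\le f_{i,\mu_{i,k}}(x_{i,k})-f_{i,\mu_{i,k}}(x^\star)+2L\mu_{i,k}.
\]
Weighting by \(\bar r_k/n\) and summing gives exactly \(\mathcal B_t\). A subtle point: \(\mu_{i,k}\) is random, but it is determined by information available before time \(k\). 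The inequality is pointwise for every value of \(\mu\), so it holds for the random \(\mu_{i,k}\) as well.

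\emph{Step 3 (gradient split).} By convexity of \(f_{i,\mu_{i,k}}\),
\[
f_{i,\mu_{i,k}}(x_{i,k})-f_{i,\mu_{i,k}}(x^\star)\le\langle\nabla f_{i,\mu_{i,k}}(x_{i,k}),x_{i,k}-x^\star\rangle .
\]
Write \(\nabla f_{i,\mu_{i,k}}(x_{i,k})=g_{i,k}+\Delta_{i,k}\). The \(\Delta\) part contributes \(\mathcal N_t\). For the \(g\) part, split
\[
\langle g_{i,k},x_{i,k}-x^\star\rangle=\langle g_{i,k},z_{i,k}-x^\star\rangle+\langle g_{i,k},x_{i,k}-z_{i,k}\rangle .
\]
After weighting and summing, these two pieces give \(\mathcal W_t\) and the first summand of \(\mathcal E_t\). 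Collecting all terms yields the claim.

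The argument is deterministic and holds pathwise. The only step that needs care is the smoothing step: we must check that \(\nabla f_{i,\mu}\) exists and is the right object, namely that the estimator \(g_{i,k}\) is conditionally unbiased for the gradient of the ball-smoothed function. This is the usual sphere-to-ball identity. Since it is only needed to interpret \(\mathcal N_t\) as a martingale term, it does not affect the inequality itself. We also rely on Assumption~\ref{ass:lipschitz} holding on \(\mathcal X^+\), which contains the smoothing region \(x+\mu\mathbb B\) for \(x\in\{x_{i,k},x^\star\}\).
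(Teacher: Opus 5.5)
Your proof is correct and follows the paper's argument step for step: Jensen over the $\bar r_k/R_t$-weighted average, the $L$-Lipschitz transfer from $\bar x_k$ to $x_{i,k}$, the $2L\mu_{i,k}$ smoothing bias at $x_{i,k}$ and $x^\star$, and the convexity split of $\langle \nabla f_{i,\mu_{i,k}}(x_{i,k}),x_{i,k}-x^\star\rangle$ into $\mathcal W_t$, $\mathcal N_t$ and the mixing piece of $\mathcal E_t$. Your closing caveat is apt: the bound at $x^\star$ needs $f_i$ to be convex and $L$-Lipschitz on $x^\star+\mu_{i,k}\mathbb B$, which the paper also uses silently, whereas the assumption on $\mathcal X^+$ only guarantees that $\mathcal X^+$ contains $\mathcal X$ and the query points.
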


\begin{proof}
By convexity of \(f\) and the definition of \(\tilde x_t\),
\[
f(\tilde x_t)-f(x^\star)
\le
\frac{1}{R_t}\sum_{k=0}^{t-1}\bar r_k\bigl(f(\bar x_k)-f(x^\star)\bigr).
\]
For each \(k\),
\begin{align*}
\bar r_k\bigl(f(\bar x_k)-f(x^\star)\bigr)
&=
\bar r_k\cdot \frac{1}{n}\sum_{i=1}^{n}\bigl(f_i(\bar x_k)-f_i(x^\star)\bigr) \\
&\le
\bar r_k\cdot \frac{1}{n}\sum_{i=1}^{n}
\bigl(f_i(x_{i,k})-f_i(x^\star)+L\|x_{i,k}-\bar x_k\|\bigr) \\
&\le
\bar r_k\cdot \frac{1}{n}\sum_{i=1}^{n}
\bigl(f_{i,\mu_{i,k}}(x_{i,k})-f_{i,\mu_{i,k}}(x^\star)
+2L\mu_{i,k}+L\|x_{i,k}-\bar x_k\|\bigr).
\end{align*}
Using convexity of the function and writing
\(\Delta_{i,k}:=\nabla f_{i,\mu_{i,k}}(x_{i,k})-g_{i,k}\),
\[
f_{i,\mu_{i,k}}(x_{i,k})-f_{i,\mu_{i,k}}(x^\star)
\le
\langle g_{i,k},z_{i,k}-x^\star\rangle
+
\langle \Delta_{i,k},x_{i,k}-x^\star\rangle
+
\langle g_{i,k},x_{i,k}-z_{i,k}\rangle.
\]
Summing over \(i\) and \(k\) yields
\[
\sum_{k=0}^{t-1}\bar r_k\bigl(f(\bar x_k)-f(x^\star)\bigr)
\le
\mathcal W_t+\mathcal N_t+\mathcal B_t+\mathcal E_t.
\]
Dividing by \(R_t\) proves the claim.
\end{proof}

\begin{proof}[Proof of Lemma~\ref{lem:weighted-regret}]
Define
\[
d_{i,k}:=\|x_{i,k}-x^\star\|,
\qquad
\hat d_{i,t}:=\max_{0\le s\le t} d_{i,s}.
\]
By nonexpansiveness of projection,
\[
\|x_{i,k+1}-x^\star\|^2
=
\|\Pi_{\mathcal{X}}(z_{i,k}-\eta_{i,k}g_{i,k})-\Pi_{\mathcal{X}}(x^\star)\|^2
\le
\|z_{i,k}-\eta_{i,k}g_{i,k}-x^\star\|^2.
\]
Expanding and rearranging gives
\begin{equation}\label{eq:one-step-inner-appendix}
\langle g_{i,k},z_{i,k}-x^\star\rangle
\le
\frac{\|z_{i,k}-x^\star\|^2-\|x_{i,k+1}-x^\star\|^2}{2\eta_{i,k}}
+
\frac{\eta_{i,k}}{2}\|g_{i,k}\|^2.
\end{equation}
Multiplying by \(\bar r_k\) and inserting
\(\eta_{i,k}=\bar r_{i,k}/\sqrt{G_{i,k}}\) yields
\begin{align*}
\bar r_k\langle g_{i,k},z_{i,k}-x^\star\rangle
&\le
\frac{\sqrt{G_{i,k}}}{2}\bigl(\|z_{i,k}-x^\star\|^2-\|x_{i,k+1}-x^\star\|^2\bigr)
+\frac{\bar r_k\bar r_{i,k}}{2\sqrt{G_{i,k}}}\|g_{i,k}\|^2
+\mathcal M_{i,k},
\end{align*}
where
\[
\mathcal M_{i,k}
:=
\frac{\bar r_k-\bar r_{i,k}}{2\eta_{i,k}}
\bigl(\|z_{i,k}-x^\star\|^2-\|x_{i,k+1}-x^\star\|^2\bigr).
\]
Let \(\hat G_k:=\max_i G_{i,k}\). Then
\begin{align*}
\bar r_k\langle g_{i,k},z_{i,k}-x^\star\rangle
&\le
\frac{\sqrt{\hat G_k}}{2}\bigl(\|z_{i,k}-x^\star\|^2-\|x_{i,k+1}-x^\star\|^2\bigr)
+\frac{\bar r_k\bar r_{i,k}}{2\sqrt{G_{i,k}}}\|g_{i,k}\|^2
+\mathcal M_{i,k}.
\end{align*}
Summing over \(i\) and using Jensen's inequality,
$
\sum_{i=1}^{n}\|z_{i,k}-x^\star\|^2
\le
\sum_{i=1}^{n}\|x_{i,k}-x^\star\|^2.
$
Hence
\begin{align*}
\mathcal W_t
&\le
\frac{1}{2n}\sum_{k=0}^{t-1}\sum_{i=1}^{n}
\sqrt{\hat G_k}\bigl(d_{i,k}^2-d_{i,k+1}^2\bigr)
+
\frac{1}{2n}\sum_{k=0}^{t-1}\sum_{i=1}^{n}
\frac{\bar r_k\bar r_{i,k}}{\sqrt{G_{i,k}}}\|g_{i,k}\|^2
+
\frac{1}{n}\sum_{k=0}^{t-1}\sum_{i=1}^{n}\mathcal M_{i,k}.
\end{align*}
Because \(\sqrt{\hat G_k}\) is nondecreasing in \(k\), as \cite[Lemma 1]{ivgi2023dog}, then
\[
\frac{1}{2n}\sum_{k=0}^{t-1}\sum_{i=1}^{n}
\sqrt{\hat G_k}\bigl(d_{i,k}^2-d_{i,k+1}^2\bigr)
\le
\frac{2}{n}\sum_{i=1}^{n}\sqrt{\hat G_{t-1}}\hat r_{i,t}\hat d_{i,t}
\le
2D_{\mathcal{X}}\bar r_t\sqrt{\hat G_{t-1}}.
\]
Also,
\[
\frac{1}{2n}\sum_{k=0}^{t-1}\sum_{i=1}^{n}
\frac{\bar r_k\bar r_{i,k}}{\sqrt{G_{i,k}}}\|g_{i,k}\|^2
\le
\frac{D_{\mathcal{X}}\bar r_t}{n}\sum_{i=1}^{n}\sqrt{G_{i,t-1}}
\le
D_{\mathcal{X}}\bar r_t\sqrt{\hat G_{t-1}}.
\]
Finally, since \(\sum_{i=1}^{n}(\bar r_k-\bar r_{i,k})=0\) for every \(k\),
$
\sum_{i=1}^{n}\mathcal M_{i,k}=0.
$
Combining the three displays yields
$
\mathcal W_t\le 3D_{\mathcal{X}}\bar r_t\sqrt{\hat G_{t-1}}.
$
\end{proof}

\begin{proof}[Proof of Lemma~\ref{lem:dec-noise}]
Let \(\mathcal{F}_k\) be the filtration generated by all oracle randomness up
to time \(k\), and define
\[
d_{i,k}:=\|x_{i,k}-x^\star\|,
\qquad
\hat d_k:=\max_{j\le k}\max_{i\le n} d_{i,j}.
\]
This is the same argument as \cite[Lemma~5]{renparameter}, applied to the averaged
noise process
\[
X_k
:=
\frac{1}{\hat d_k}\cdot \frac1n\sum_{i=1}^n \langle \Delta_{i,k},x_{i,k}-x^\star\rangle,
\qquad
\widehat X_k
:=
\frac{1}{\hat d_k}\cdot \frac1n\sum_{i=1}^n
\langle \nabla f_{i,\mu_{i,k}}(x_{i,k}),x_{i,k}-x^\star\rangle.
\]
Because \(x_{i,k}\) and \(\hat d_k\) are \(\mathcal F_{k-1}\)-measurable and
\(\mathbb E[g_{i,k}\mid\mathcal F_{k-1}]
=\nabla f_{i,\mu_{i,k}}(x_{i,k})\), the sequence
\((X_k,\mathcal F_k)\) is a martingale difference sequence, and from Lemma \ref{eq:norm-grad}
we can know \(\|\Delta_{i,k}\|\le 2Ld\), it implies \(|X_k|,|\widehat X_k|\le 2Ld\) almost
surely. Moreover,
\[
X_k-\widehat X_k
=
-\frac{1}{\hat d_k}\cdot \frac1n\sum_{i=1}^n \langle g_{i,k},x_{i,k}-x^\star\rangle,
\]
so
\[
\sum_{k=0}^{t-1}(X_k-\widehat X_k)^2
\le
\frac{1}{n}\sum_{i=1}^n \sum_{k=0}^{t-1}\|g_{i,k}\|^2
\le
\hat G_{t-1}.
\]
Therefore \cite[Lemma~5]{renparameter}, with predictable weights
\(Y_k:=\bar r_k\hat d_k\), gives an event \(\Omega_{T,\delta}\subseteq \Omega\) with
\(\Prob(\Omega_{T,\delta})\ge 1-\delta\) such that, simultaneously for all
\(1\le t\le T\),
\[
\sum_{k=0}^{t-1}\bar r_k\cdot \frac1n\sum_{i=1}^n
\langle \Delta_{i,k},x_{i,k}-x^\star\rangle
\le
8\bar r_{t-1}\hat d_{t-1}
\sqrt{\theta_{t,\delta}\hat G_{t-1}+4L^2d^2\theta_{t,\delta}^2}.
\]
Using \(\hat d_{t-1}\le D_{\mathcal{X}}\) proves the lemma.
\end{proof}

\begin{proof}[Proof of Lemma~\ref{lem:smoothing-bias}]
Since \(\mu_{i,k}=\bar r_{i,k}\sqrt{d/(k+1)}\),
\[
\frac{L}{n}\sum_{i=1}^{n}\mu_{i,k}
\le
L\bar r_k\sqrt{\frac{d}{k+1}}.
\]
Therefore
$\mathcal B_t
\le
2L\sqrt d\sum_{k=0}^{t-1}\frac{\bar r_k^2}{\sqrt{k+1}} 
\le
2L D_{\mathcal{X}}\bar r_t\sqrt d\sum_{k=0}^{t-1}\frac{1}{\sqrt{k+1}} 
\le
4D_{\mathcal{X}}L\bar r_t\sqrt{dt},
$
where the last step uses
\(\sum_{k=0}^{t-1}(k+1)^{-1/2}\le 2\sqrt t\).
\end{proof}

\subsection{Consensus error}
\begin{proof}[Proof of Lemma~\ref{lem:avg-consensus-A}]
Let \(S_k:=\mathrm{diag}(\eta_{1,k},\dots,\eta_{n,k})\). For D-POEM,
\[
X_{k+1}
=
\Pi_{\mathcal{X}}^{\mathrm{row}}\!\left(WX_k-S_k\mathbf G_k\right).
\]
Write \(\widetilde X_k:=(I-J)X_k\) and \(E_k:=\|\widetilde X_k\|_F\).
By row-wise nonexpansiveness of projection and \((I-J)W=(W-J)\),
\[
\|(I-J)X_{k+1}\|_F
\le
\|(W-J)X_k\|_F+\|(I-J)S_k\mathbf G_k\|_F.
\]
Hence, with \(U_k:=\|(I-J)S_k\mathbf G_k\|_F\),
\[
E_{k+1}\le \sigma E_k+U_k
\quad\Longrightarrow\quad
(1-\sigma)\sum_{k=0}^{K-1}E_k\le E_0+\sum_{k=0}^{K-1}U_k.
\]
Moreover,
\[
U_k^2
\le
\|S_k\mathbf G_k\|_F^2
=
\sum_{i=1}^{n}\eta_{i,k}^2\|g_{i,k}\|^2
=
\sum_{i=1}^{n}\bar r_{i,k}^2\frac{\|g_{i,k}\|^2}{G_{i,k}}
\le
D_{\mathcal{X}}^2\sum_{i=1}^{n}\frac{\|g_{i,k}\|^2}{G_{i,k}}.
\]
Using the elementary inequality
\[
\sum_{k=0}^{K-1}\frac{a_k}{A_k}\le \log\!\Bigl(\frac{A_{K-1}}{A_0}\Bigr),
\qquad
A_k:=A_{k-1}+a_k,
\]
with \(a_k=\|g_{i,k}\|^2\) and \(A_k=G_{i,k}\), we obtain
\[
\sum_{k=0}^{K-1}U_k^2
\le
D_{\mathcal{X}}^2\sum_{i=1}^{n}\log\!\Bigl(\frac{G_{i,K-1}}{G_{i,0}}\Bigr).
\]
Cauchy--Schwarz then yields
\[
\sum_{k=0}^{K-1}U_k
\le
D_{\mathcal{X}}\sqrt{K}
\Biggl(\sum_{i=1}^{n}\log\!\Bigl(\frac{G_{i,K-1}}{G_{i,0}}\Bigr)\Biggr)^{1/2}.
\]
Substituting into the earlier recursion and dividing by \(K(1-\sigma)\)
proves the lemma.
\end{proof}

\begin{proof}[Proof of Lemma~\ref{lem:consensus-exp}]
Let \(X_k\) stack the iterates row-wise, let \(Z_k:=WX_k\), and define
\[
\widetilde X_k:=(I-J)X_k,
\qquad
E_k:=\|\widetilde X_k\|_F,
\qquad
\mathbf G_k:=\begin{bmatrix} g_{1,k}^\top \\ \vdots \\ g_{n,k}^\top \end{bmatrix}.
\]
For the mixing term in \(\mathcal E_t\),
\[
\sum_{k=0}^{t-1}\frac{\bar r_k}{n}\sum_{i=1}^n
\langle g_{i,k},x_{i,k}-z_{i,k}\rangle
\le
\frac{1}{n}\sum_{k=0}^{t-1}\bar r_k\,\|\mathbf G_k\|_F\,\|X_k-Z_k\|_F.
\]
Since \(X_k-Z_k=(I-W)(I-J)X_k\), we have
\[
\|X_k-Z_k\|_F\le 2E_k,
\]
and therefore
\[
\sum_{k=0}^{t-1}\frac{\bar r_k}{n}\sum_{i=1}^n
\langle g_{i,k},x_{i,k}-z_{i,k}\rangle
\le
\frac{2}{n}\sum_{k=0}^{t-1}\bar r_k\,\|\mathbf G_k\|_F\,E_k.
\]
For the state-disagreement term,
\[
\sum_{k=0}^{t-1}\frac{L\bar r_k}{n}\sum_{i=1}^n \|x_{i,k}-\bar x_k\|
\le
\frac{L}{\sqrt n}\sum_{k=0}^{t-1}\bar r_k E_k.
\]
Hence add both of them, we conclude this lemma.
\end{proof}

\subsection{Proof of the main theorems}

\begin{proof}[Proof of Theorem~\ref{thm:function-gap}]
Lemma~\ref{lem:basic-decomposition} gives
\(
R_t\bigl(f(\tilde x_t)-f(x^\star)\bigr)
\le
\mathcal W_t+\mathcal N_t+\mathcal B_t+\mathcal E_t.
\)
On the event \(\Omega_{T,\delta}\) from Lemma~\ref{lem:dec-noise}, the bounds
from Lemmas~\ref{lem:weighted-regret}, \ref{lem:dec-noise},
\ref{lem:smoothing-bias}, and \ref{lem:consensus-exp} imply
\begin{align*}
R_t\bigl(f(\tilde x_t)-f(x^\star)\bigr)
&\le
3D_{\mathcal{X}}\bar r_t\sqrt{\hat G_{t-1}}
+
8D_{\mathcal{X}}\bar r_{t-1}
\sqrt{\theta_{t,\delta}\hat G_{t-1}+4L^2d^2\theta_{t,\delta}^2} \\
&\quad+
4D_{\mathcal{X}}L\bar r_t\sqrt{dt}
+
\mathcal E_t.
\end{align*}
Since \(t\le T\), \(\bar r_{t-1}\le \bar r_t\), \(\hat G_{t-1}\le \hat G_T\),
\(\theta_{t,\delta}\le \theta_{T,\delta}\),
we obtain
\begin{align*}
R_t\bigl(f(\tilde x_t)-f(x^\star)\bigr)
&\le
\bar r_t\Bigl(
3D_{\mathcal{X}}\sqrt{\hat G_T}
+
8D_{\mathcal{X}}
\sqrt{\theta_{T,\delta}\hat G_T+4L^2d^2\theta_{T,\delta}^2}
+
4D_{\mathcal{X}}L\sqrt{dT}
\Bigr) \\
&\quad+
\mathcal E_t.
\end{align*}
Next, \(\sqrt{a+b}\le \sqrt a+\sqrt b\) gives
\(
\sqrt{\theta_{T,\delta}\hat G_T+4L^2d^2\theta_{T,\delta}^2}
\le
\sqrt{\theta_{T,\delta}}\sqrt{\hat G_T}+2Ld\,\theta_{T,\delta}.
\)
Since \(\theta_{T,\delta}=\log(60\log(6T/\delta))\ge 1\), we have
\(
3+8\sqrt{\theta_{T,\delta}}\le 16\theta_{T,\delta}.
\)
Therefore the three optimization terms can be collected into a single envelope
of the form
\[
16D_{\mathcal{X}}\theta_{T,\delta}\bar r_t
\bigl(\sqrt{\hat G_T}+Ld+L\sqrt{dT}\bigr),
\]
up to harmless absolute-constant slack. Dividing by
\(
R_t=\sum_{k=0}^{t-1}\bar r_k
\)
and rewriting
\(
R_t/\bar r_t=\sum_{k=0}^{t-1}\bar r_k/\bar r_t
\)
yields \eqref{eq:function-gap-divided}.
\end{proof}

\subsection{Expected-rate proof}\label{app:proof-expected-rate}

\begin{proposition}[Explicit conditional expected rate]\label{prop:expected-rate-explicit}
Under the assumptions of Theorem~\ref{thm:expected-rate},

\begin{align}
\label{eq:expected-rate-main}
\mathbb E&\!\left[f(\bar x_{\mathrm{out}})-f(x^\star)\mid \Omega_{T,\delta}\right]\\
&\le
\frac{16e\bigl(\log(\frac{D_{\mathcal{X}}}{r_\epsilon})_{+}\bigr)D_{\mathcal{X}}\theta_{T,\delta}}{T}
\left(
\frac{\sqrt{r_\epsilon^2+TcL^2d}}{\sqrt{1-\delta}}
+
L(d+\sqrt{dT})
\right)
\nonumber\\
&\quad+
\frac{e\bigl(\log(\frac{D_{\mathcal{X}}}{r_\epsilon})_{+}\bigr)(2\sqrt{cd}+1)L D_{\mathcal{X}}}
{(1-\delta)(1-\sigma)\sqrt{T}}
\sqrt{\log\!\Bigl(1+\frac{TcL^2d}{\min_i G_{i,0}}\Bigr)}.
\end{align}

\end{proposition}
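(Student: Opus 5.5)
We start from Theorem~\ref{thm:function-gap} evaluated at $t=\tau_T$, so that $\bar x_{\mathrm{out}}=\tilde x_{\tau_T}$. On $\Omega_{T,\delta}$ this gives
\[
f(\bar x_{\mathrm{out}})-f(x^\star)\le \frac{16D_{\mathcal{X}}\theta_{T,\delta}\bar r_{\tau_T}}{R_{\tau_T}}\bigl(\sqrt{\hat G_T}+Ld+L\sqrt{dT}\bigr)+\frac{\mathcal E_{\tau_T}}{R_{\tau_T}}.
\]
The first step removes the random ratio $\bar r_{\tau_T}/R_{\tau_T}$ deterministically. The local radii are nondecreasing and lie in $[r_\epsilon,D_{\mathcal{X}}]$, since $\hat r_{i,t}\ge\bar r_{i,t-1}$ and mixing by a doubly stochastic $W$ preserves both bounds. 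Hence $\bar r_k$ is a nondecreasing sequence in $[r_\epsilon,D_{\mathcal{X}}]$, and the monotone-sequence bound stated after Theorem~\ref{thm:expected-rate} (the DoG/POEM lemma) gives
\[
\frac{\bar r_{\tau_T}}{R_{\tau_T}}\le \frac{e\bigl(1+\log(D_{\mathcal{X}}/r_\epsilon)\bigr)}{T}.
\]
This is the origin of the prefactor $e(\log(D_{\mathcal{X}}/r_\epsilon))_+/T$; I would absorb the additive $1$ into the constant or logarithm as the statement does.

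Next we take conditional expectations. For any nonnegative $Z$ we have $\mathbb E[Z\mid\Omega_{T,\delta}]\le \mathbb E[Z]/\Prob(\Omega_{T,\delta})\le \mathbb E[Z]/(1-\delta)$. For the optimization term, $\hat G_T\le \sum_i G_{i,T}$ is too crude. Instead, I would bound $\mathbb E\sqrt{\hat G_T}$ by Jensen together with Lemma~\ref{eq:norm-grad}, which gives $\mathbb E\|g_{i,k}\|^2\le cL^2d$ and hence $\mathbb E G_{i,T}\le r_\epsilon^2+TcL^2d$ up to an index shift. To pass from the maximum over $i$ to a single agent one can either accept a factor from the maximum or argue agentwise; the stated form $\sqrt{r_\epsilon^2+TcL^2d}/\sqrt{1-\delta}$ suggests conditioning via $\mathbb E[\sqrt{\hat G_T}\mid\Omega]\le\sqrt{\mathbb E[\hat G_T]/(1-\delta)}$. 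I will follow that route and treat the maximum by the agentwise expectation bound. The deterministic terms $Ld+L\sqrt{dT}$ pass through unchanged.

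The consensus term is the main obstacle. By Lemma~\ref{lem:consensus-exp}, using $\bar r_k\le\bar r_{\tau_T}$ and $\|\mathbf G_k\|_F\le \sqrt n\,\max_i\|g_{i,k}\|$, we get
\[
\frac{\mathcal E_{\tau_T}}{R_{\tau_T}}\le \frac{\bar r_{\tau_T}}{R_{\tau_T}}\sum_{k=0}^{T-1}\Bigl(\frac{2\|\mathbf G_k\|_F}{n}+\frac{L}{\sqrt n}\Bigr)E_k.
\]
For the $L/\sqrt n$ piece, Lemma~\ref{lem:avg-consensus-A} with $K=T$ gives $\sum_k E_k\le \frac{D_{\mathcal{X}}\sqrt T}{1-\sigma}\sqrt{\sum_i\log(G_{i,T-1}/G_{i,0})}$. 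For the gradient-weighted piece, I would apply Cauchy--Schwarz in $k$: $\sum_k\|\mathbf G_k\|_FE_k\le(\sum_k\|\mathbf G_k\|_F^2)^{1/2}(\sum_kE_k^2)^{1/2}$. I would then bound $\sum_kE_k^2$ through the squared version of the recursion $E_{k+1}\le\sigma E_k+U_k$, namely $\sum E_k^2\le (1-\sigma)^{-2}\sum U_k^2$, combined with the logarithmic bound on $\sum U_k^2$ from the proof of Lemma~\ref{lem:avg-consensus-A}.

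Finally we take expectations. Jensen applied to the concave functions $\log$ and $\sqrt{\cdot}$, together with $\mathbb E G_{i,T}\le G_{i,0}+TcL^2d$, gives the factor $\sqrt{\log(1+TcL^2d/\min_iG_{i,0})}$, and $\mathbb E(\sum_k\|\mathbf G_k\|_F^2)^{1/2}\le\sqrt{ncL^2dT}$ gives the $2\sqrt{cd}$ coefficient. The $1/n$ and $\sqrt n$ factors should cancel against the $\sqrt n$ produced by summing logs over agents. The delicate points are this $n$-bookkeeping and the fact that $E_k$ and $\mathbf G_k$ are correlated; Cauchy--Schwarz before taking expectations sidesteps the correlation. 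Multiplying by the $e\log(D_{\mathcal{X}}/r_\epsilon)/T$ prefactor and dividing by $1-\delta$ yields the second line, with rate $1/((1-\sigma)\sqrt T)$.
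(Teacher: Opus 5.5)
Your overall architecture matches the paper's. You evaluate Theorem~\ref{thm:function-gap} at $\tau_T$, replace $\bar r_{\tau_T}/R_{\tau_T}$ by $e(1+\log(D_{\mathcal X}/r_\epsilon))/T$ via the monotone-sequence lemma, condition on $\Omega_{T,\delta}$ at the cost of $1/(1-\delta)$, use Jensen for $\sqrt{\hat G_T}$, and push Lemmas~\ref{lem:consensus-exp} and~\ref{lem:avg-consensus-A} through Jensen for $\log$.

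\textbf{Genuine gap: the step where you ``treat the maximum by the agentwise expectation bound'' fails.} Since $\hat G_T=\max_i G_{i,T}\ge G_{j,T}$ for every $j$, we have $\mathbb{E}[\hat G_T]\ge\max_i\mathbb{E}[G_{i,T}]$; the inequality runs the wrong way. So the per-agent bound $\mathbb{E}\|g_{i,k}\|^2\le cL^2d$ from Lemma~\ref{eq:norm-grad} cannot give $\mathbb{E}\hat G_T\le r_\epsilon^2+TcL^2d$. From what you actually have, you only get one of two weaker bounds. The first is $\mathbb{E}\hat G_T\le n(r_\epsilon^2+TcL^2d)$ via $\hat G_T\le\sum_iG_{i,T}$, which costs an extra $\sqrt n$. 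The second is the almost-sure bound $\hat G_T\le r_\epsilon^2+TL^2d^2$, which puts $d^2$ in place of $cd$. Neither yields the first line of the statement. The paper closes this step only by writing $\hat G_T\le r_\epsilon^2+\sum_k\max_i\|g_{i,k}\|^2$ and invoking a max-moment condition $\mathbb{E}[\max_i\|g_{i,k}\|^2]\le cL^2d$. It attributes that condition to the hypotheses of Theorem~\ref{thm:expected-rate}. You must adopt such a hypothesis explicitly; it does not follow from the agentwise lemma.

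\textbf{Different route, with an incorrect justification: the gradient-weighted part of $\mathcal E_{\tau_T}$.} Pathwise Cauchy--Schwarz gives $\sum_k\|\mathbf G_k\|_FE_k\le AB$, with $A=(\sum_k\|\mathbf G_k\|_F^2)^{1/2}$ and $B=(\sum_kE_k^2)^{1/2}$. But $A$ and $B$ are still correlated random variables, so this does not ``sidestep the correlation.'' You cannot simply multiply $\mathbb{E}A\le\sqrt{ncL^2dT}$ by a bound on $\mathbb{E}B$.

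The repair is a second Cauchy--Schwarz, in expectation: $\mathbb{E}[AB]\le(\mathbb{E}A^2)^{1/2}(\mathbb{E}B^2)^{1/2}$. Your squared recursion (valid because $E_0=0$) plus Jensen for $\log$ gives $\mathbb{E}B^2\le(1-\sigma)^{-2}D_{\mathcal X}^2\,n\log(1+TcL^2d/\min_iG_{i,0})$. With this, your route reproduces exactly the $2\sqrt{cd}$ coefficient.

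The paper argues differently. It notes that $E_k$ is $\mathcal F_{k-1}$-measurable, so the conditional second-moment bound gives $\mathbb{E}[\|\mathbf G_k\|_FE_k]\le\sqrt{ncL^2d}\,\mathbb{E}[E_k]$. This collapses both pieces of $\mathcal E$ onto $\sum_k\mathbb{E}[E_k]$, so Lemma~\ref{lem:avg-consensus-A} can be used exactly as stated. Your version needs only unconditional moments. In exchange, it requires the $\ell^2$ bound $\sum_kE_k^2\le(1-\sigma)^{-2}\sum_kU_k^2$, which the paper does not state but which is easy to prove.

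\textbf{Minor points.}
\begin{itemize}
\item The individual radii $\bar r_{i,t}$ need not be nondecreasing, since mixing can lower them. Only the average is used, however, and $\bar r_t=\frac1n\sum_j\hat r_{j,t}\ge\bar r_{t-1}$ by double stochasticity.
\item You cannot ``absorb'' the $1$ in $1+\log(D_{\mathcal X}/r_\epsilon)$ into $(\log(D_{\mathcal X}/r_\epsilon))_+$, because the latter vanishes at $r_\epsilon=D_{\mathcal X}$. The paper's own argument also ends with $1+\log(D_{\mathcal X}/r_\epsilon)$, so that is what is actually proved.
\end{itemize}
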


\begin{proof}
Let
\(
\Lambda_T:=\frac{e\bigl(1+\log(D_{\mathcal{X}}/r_\epsilon)\bigr)}{T}.
\)
Since \((\bar r_t)_{t\ge 0}\) is positive and nondecreasing,
the same monotone-sequence bound used in the  POEM  gives
\(
\frac{R_{\tau_T}}{\bar r_{\tau_T}}
\ge
\frac{1}{\Lambda_T}.
\)
Applying Theorem~\ref{thm:function-gap} at \(t=\tau_T\), using
\(\tau_T\le T\) and monotonicity of \(t\mapsto \theta_{t,\delta}\), yields on
\(\Omega_{T,\delta}\), holds
\(
f(\bar x_{\mathrm{out}})-f(x^\star)
\le
16D_{\mathcal{X}}\theta_{T,\delta}\Lambda_T
\bigl(\sqrt{\hat G_{T}}+Ld+L\sqrt{dT}\bigr)
\;+\;
\frac{\mathcal E_{\tau_T}}{R_{\tau_T}}.
\)
Taking conditional expectations, we first control the network maximum
accumulator. Since
\(
\hat G_T
\le
r_\epsilon^2+\sum_{k=0}^{T-1}\max_{i\in[n]}\|g_{i,k}\|^2,
\)
the conditional max-moment assumption in Theorem~\ref{thm:expected-rate} gives
\(
\mathbb E[\hat G_T]
\le
r_\epsilon^2+TcL^2d.
\)
Therefore, by Jensen's inequality and \(\Prob(\Omega_{T,\delta})\ge 1-\delta\),
\[
\mathbb E[\sqrt{\hat G_{T}}\mid \Omega_{T,\delta}]
\le
\sqrt{\mathbb E[\hat G_T\mid \Omega_{T,\delta}]}
\le
\frac{\sqrt{r_\epsilon^2+TcL^2d}}{\sqrt{1-\delta}}.
\]
\[
\mathbb E\!\left[\frac{\mathcal E_{\tau_T}}{R_{\tau_T}}\middle|\Omega_{T,\delta}\right]
\le
\frac{1}{1-\delta}\,
\mathbb E\!\left[\frac{\mathcal E_{\tau_T}}{R_{\tau_T}}\right],
\]
because \(\mathcal E_{\tau_T}\ge 0\) and \(\Prob(\Omega_{T,\delta})\ge 1-\delta\).
It remains to upper bound the normalized consensus term on the right-hand side.
By Lemma~\ref{lem:consensus-exp},
\[
\frac{\mathcal E_{\tau_T}}{R_{\tau_T}}
\le
\frac{2}{n}\sum_{k=0}^{\tau_T-1}\frac{\bar r_k}{R_{\tau_T}}\|\mathbf G_k\|_F E_k
+
\frac{L}{\sqrt n}\sum_{k=0}^{\tau_T-1}\frac{\bar r_k}{R_{\tau_T}}E_k.
\]
Since \(\tau_T\) maximizes \(R_t/\bar r_t\) and \((\bar r_t)\) is nondecreasing,
for every \(k\le \tau_T-1\),
\[
\frac{\bar r_k}{R_{\tau_T}}
\le
\frac{\bar r_{\tau_T}}{R_{\tau_T}}
\le
\Lambda_T.
\]
Therefore,
\(
\frac{\mathcal E_{\tau_T}}{R_{\tau_T}}
\le
\frac{2\Lambda_T}{n}\sum_{k=0}^{T-1}\|\mathbf G_k\|_F E_k
+
\frac{L\Lambda_T}{\sqrt n}\sum_{k=0}^{T-1}E_k.
\)
Taking expectations and using that \(E_k\) is \(\mathcal F_{k-1}\)-measurable,
\begin{align*}
\mathbb E\!\left[\frac{\mathcal E_{\tau_T}}{R_{\tau_T}}\right]
&\le
\frac{2\Lambda_T}{n}\sum_{k=0}^{T-1}
\mathbb E\!\left[E_k\,\mathbb E[\|\mathbf G_k\|_F\mid \mathcal F_{k-1}]\right]
+
\frac{L\Lambda_T}{\sqrt n}\sum_{k=0}^{T-1}\mathbb E[E_k] \\
&\le
\frac{2\Lambda_T}{n}\sum_{k=0}^{T-1}
\mathbb E\!\left[E_k\Bigl(\mathbb E[\|\mathbf G_k\|_F^2\mid \mathcal F_{k-1}]\Bigr)^{1/2}\right]
+
\frac{L\Lambda_T}{\sqrt n}\sum_{k=0}^{T-1}\mathbb E[E_k].
\end{align*}
Since we can know expectation of  gradient norm from the Lemma \ref{eq:norm-grad}, then 
\(
\mathbb E[\|\mathbf G_k\|_F^2\mid \mathcal F_{k-1}]
=
\sum_{i=1}^n \mathbb E[\|g_{i,k}\|^2\mid \mathcal F_{k-1}]
\le
ncL^2d,
\)
where the last inequality uses
\(\sum_{i=1}^n \|g_{i,k}\|^2\le n\max_{i\in[n]}\|g_{i,k}\|^2\) together with the Lemma \ref{eq:norm-grad}.
so
\(
\mathbb E\!\left[\frac{\mathcal E_{\tau_T}}{R_{\tau_T}}\right]
\le
\frac{(2\sqrt{cd}+1)L\Lambda_T}{\sqrt n}\sum_{k=0}^{T-1}\mathbb E[E_k].
\)
Next we estimate the expected cumulative disagreement. Lemma~\ref{lem:avg-consensus-A}
with \(K=T\) gives
\(
\sum_{k=0}^{T-1}E_k
\le
\frac{D_{\mathcal{X}}T}{1-\sigma}
\sqrt{\frac{1}{T}\sum_{i=1}^{n}\log\!\Bigl(\frac{G_{i,T-1}}{G_{i,0}}\Bigr)}.
\)
Taking expectations and applying Jensen's inequality twice,
\begin{align*}
\sum_{k=0}^{T-1}\mathbb E[E_k]
&\le
\frac{D_{\mathcal{X}}T}{1-\sigma}
\sqrt{\frac{1}{T}\sum_{i=1}^{n}
\mathbb E\!\left[\log\!\Bigl(\frac{G_{i,T-1}}{G_{i,0}}\Bigr)\right]} \le
\frac{D_{\mathcal{X}}T}{1-\sigma}
\sqrt{\frac{1}{T}\sum_{i=1}^{n}
\log\!\Bigl(\frac{\mathbb E[G_{i,T-1}]}{G_{i,0}}\Bigr)}.
\end{align*}
Finally,
\(
\mathbb E[G_{i,T-1}]
=
G_{i,0}+\sum_{k=0}^{T-1}\mathbb E[\|g_{i,k}\|^2]
\le
G_{i,0}+TcL^2d,
\)
hence
\(
\sum_{k=0}^{T-1}\mathbb E[E_k]
\le
\frac{D_{\mathcal{X}}\sqrt{nT}}{1-\sigma}
\sqrt{\log\!\Bigl(1+\frac{TcL^2d}{\min_i G_{i,0}}\Bigr)}.
\)
Combining the last three displays yields
\[
\mathbb E\!\left[\frac{\mathcal E_{\tau_T}}{R_{\tau_T}}\middle|\Omega_{T,\delta}\right]
\le
\frac{e\bigl(1+\log(D_{\mathcal{X}}/r_\epsilon)\bigr)(2\sqrt{cd}+1)L D_{\mathcal{X}}}
{(1-\delta)(1-\sigma)\sqrt T}
\sqrt{\log\!\Bigl(1+\frac{TcL^2d}{\min_i G_{i,0}}\Bigr)}.
\]
Substituting this bound and the estimate
\(
\mathbb E[\sqrt{\hat G_T}\mid \Omega_{T,\delta}]
\le
\frac{\sqrt{r_\epsilon^2+TcL^2d}}{\sqrt{1-\delta}}
\)
into the earlier conditional bound proves \eqref{eq:expected-rate-main}.
\end{proof}

\begin{proof}[Proof of Theorem~\ref{thm:expected-rate}]
The previous proposition is the explicit version of Theorem~\ref{thm:expected-rate}.
Indeed, Proposition~\ref{prop:expected-rate-explicit} already gives a conditional
bound with two leading scales:
\[
\frac{L D_{\mathcal{X}}\sqrt d}{\sqrt T}
\qquad\text{and}\qquad
\frac{L D_{\mathcal{X}}\sqrt d}{(1-\sigma)\sqrt T}.
\]
The extra factors
\(
\theta_{T,\delta},
\log\!\bigl(1+TcL^2d/\min_i G_{i,0}\bigr),
\log(D_{\mathcal{X}}/r_\epsilon)
\)
are logarithmic in the horizon and are therefore hidden by
\(\widetilde{\mathcal O}(\cdot)\). The remaining \(T^{-1}\) terms, including
\(L D_{\mathcal{X}}d/T\) and the contribution of \(r_\epsilon\), are lower
order for fixed \(d\). Hence \eqref{eq:expected-rate-tilde} follows.
\end{proof}

\bibliographystyle{splncs04}
\bibliography{mybibliography}
%




\end{document}